\definecolor{webgreen}{rgb}{0,.5,0}
\definecolor{webbrown}{rgb}{.6,0,0}
\newcommand{\seqnum}[1]{\href{https://oeis.org/#1}{\rm \underline{#1}}}
\newcommand{\Q}{{\mathbb Q}}
\newcommand{\N}{{\mathbb N}}
\newcommand{\floor}[1]{\left\lfloor#1\right\rfloor}
\newcommand{\arXiv}[2]{\href{https://arxiv.org/abs/#1}{\texttt{arXiv:#1 [#2]}}}
\DeclareMathOperator{\Res}{Res}
\DeclareMathOperator{\Syl}{Syl}
\begin{document}


\theoremstyle{plain}
\newtheorem{theorem}{Theorem}
\newtheorem{corollary}[theorem]{Corollary}
\newtheorem{lemma}[theorem]{Lemma}
\newtheorem{proposition}[theorem]{Proposition}

\theoremstyle{definition}
\newtheorem{definition}[theorem]{Definition}
\newtheorem{example}[theorem]{Example}
\newtheorem{conjecture}[theorem]{Conjecture}

\theoremstyle{remark}
\newtheorem{remark}[theorem]{Remark}

\begin{center}
\vskip 1cm{\Large\bf 
Binomial Convolutions for Rational Power Series
}
\vskip 1cm
Ira M. Gessel\footnote{Supported by a grant from the Simons Foundation (\#427060, Ira Gessel).}\\
Department of Mathematics \\
Brandeis University \\
Waltham, MA 02453 \\USA\\
\href{mailto:gessel@brandeis.edu}{\tt gessel@brandeis.edu} \\
\ \\
Ishan Kar\\
College of Letters and Science \\ 
University of California\\
Bekeley, CA 94720\\ 
USA \\ 
\href{mailto:(ishankar@berkeley.edu}{\tt ishankar@berkeley.edu}
\end{center}

\vskip .2 in

\begin{abstract}
The binomial convolution of two sequences $\{a_n\}$ and $\{b_n\}$ is the sequence whose $n$th term is 
$\sum_{k=0}^{n} \binom{n}{k} a_k b_{n-k}$. If $\{a_n\}$ and $\{b_n\}$ have rational generating functions then so does their binomial convolution. We discuss an efficient method, using resultants, for computing  this rational  generating function and give several examples involving Fibonacci and tribonacci numbers and related sequences. We then describe a similar method for computing Hadamard products of rational generating functions. 
Finally we describe two additional methods for computing binomial convolutions and Hadamard products of rational power series, one using symmetric functions and one using partial fractions.
\end{abstract}

\section{Introduction}
The ordinary convolution of two sequences $\{\alpha_n\}$ and $\{\beta_n\}$ is the sequence $\{\gamma_n\}$ defined by $\gamma_n = \sum_{k=0}^n \alpha_k \beta_{n-k}$. Ordinary convolutions are closely related to ordinary generating functions: if $\{\gamma_n\}$ is the ordinary convolution of $\{\alpha_n\}$ and $\{\beta_n\}$ then 
\begin{equation*}
\biggl(\sum_{n=0}^\infty \alpha_n x^n\biggr)\biggl( \sum_{n=0}^\infty \beta_n x^n\bigg) = \sum_{n=0}^\infty \gamma_n x^n.
\end{equation*}

The \emph{binomial convolution} of two sequences $\{a_n\}$ and $\{b_n\}$ is the sequence $\{c_n\}$ defined by $c_n = \sum_{k=0}^{n} \binom{n}{k} a_k b_{n-k}$. The \emph{exponential generating function} 
$\sum_{n=0}^\infty c_n {x^n}/{n!}$ for $\{c_n\}$
is the product of the exponential generating functions for $\{a_n\}$ and $\{b_n\}$. So binomial convolutions of sequences with simple exponential generating functions are easily dealt with.

We are concerned here with binomial convolutions of sequences with  rational ordinary
generating functions. 
Let us define an operation $\odot$ on formal power series in $x$ with coefficients in a field of characteristic $0$ by
\begin{equation} \label{e-oper}
    \sum_{n=0}^\infty a_nx^n \odot \sum_{n=0}^\infty b_nx^n = \sum_{n=0}^\infty c_nx^n,
\end{equation}
where $c_n = \sum_{k=0}^n \binom{n}{k} a_k b_{n-k}$.
In other words, \eqref{e-oper} is equivalent to 
\begin{equation}\label{e-gen}
    \biggl( \sum_{n=0}^\infty a_n \frac{x^n}{n!} \biggl) \biggl( \sum_{n=0}^\infty b_n \frac{x^n}{n!} \biggl) =
    \sum_{n=0}^\infty c_n \frac{x^n}{n!}.
\end{equation}

We call $A(x)\odot B(x)$ the \emph{binomial product} of $A(x)$ and $B(x)$. (This operation is sometimes called the \emph{Hurwitz product} \cite{abm, fliess} or \emph{shuffle product} \cite{bacher}.)
It follows from the equivalence of \eqref{e-oper} and \eqref{e-gen} that $\odot$ is associative, with identity element 1, and that the inverse with respect to $\odot$ of $1/(1-\alpha x)$ is $1/(1+\alpha x)$.
As we will see in Theorem \ref{t-rbc}, if $A(x)$ and $B(x)$ are rational, then so is $A(x)\odot B(x)$. 
We will discuss several methods for computing binomial products of rational power series and give examples. 

In Section \ref{s-result} we describe a method for computing binomial products of rational power series efficiently, using resultants. 
In Section \ref{s-ex} we give several examples of binomial convolution formulas involving Fibonacci, tribonacci, and Perrin numbers.

We discuss in Section \ref{s-Hadamard}
a similar method, using resultants, for computing Hadamard products of rational power series. Then in Section \ref{s-sym}  we discuss a different method, using symmetric functions, for computing binomial products and Hadamard products of rational power series, and in Section \ref{s-pf} we discuss another method for binomial and Hadamard products, using partial fractions to compute constant terms of Laurent series.

\subsection{Computing binomial convolutions}
\label{s-cbc}
For rational power series with denominators of degree 2, it is not difficult to compute binomial convolutions directly. For example, for the Fibonacci numbers $F_n$, with 
\begin{equation*}
\sum_{n=0}^\infty F_n x^n = \frac{x}{1-x-x^2},
\end{equation*}
we have the explicit formula $F_n = (\alpha^n - \beta^n)/\sqrt5$, where $\alpha=(1+\sqrt5)/2$ and 
$\beta=(1-\sqrt5)/2$. Thus since $\alpha+1=\alpha^2$ and $\beta+1=\beta^2$, we have
\begin{align*}
\sum_{k=0}^n\binom{n}{k}F_k &= \frac{1}{\sqrt5}\sum_{k=0}^n \binom{n}{k}(\alpha^k - \beta^k)\\
  &=\bigl((\alpha+1)^n -(\beta+1)^n\bigr)/\sqrt5\\
  &=(\alpha^{2n}-\beta^{2n})/\sqrt5 = F_{2n},
\end{align*}
as is well known. (Note that our computation does not  use the explicit values of $\alpha$ and $\beta$, but only the quadratic equation that they satisfy.)
In the same way, we can prove the binomial convolution
\begin{equation}
\label{e-cb1}
\sum_{k=0}^n\binom nk F_k F_{n-k} = \frac15(2^n L_n -2),
\end{equation}
where $L_n$ is the Lucas number $F_{n-1}+F_{n+1}$,
as shown by Church and Bicknell \cite{church}.

More challenging are sequences whose generating functions have a denominator of degree~3. One well-known example is the tribonacci sequence, defined by
$T_{-1}=T_0=0$, $T_1=1$, and  $T_n= T_{n-1}+T_{n-2}+T_{n-3}$ for $n\ge2$, with generating function
\begin{equation*}
\sum_{n=0}^\infty T_n x^n = \frac{x}{1-x-x^2-x^3}.
\end{equation*}
(There are several different conventions for the index of the first nonzero tribonacci number. We start with $T_1$, following Komatsu \cite{koma3}, but some authors start with $T_0$ or $T_2$.)
The tribonacci numbers are sequence \seqnum{A000073} in the On-Line Encyclopedia of Integer Sequences (OEIS) \cite{oeis}.

Komatsu \cite{koma} 
found a formula, given in  \eqref{e-komatsu3}, for 
the numbers
\begin{equation}
    \sum_{k=0}^{n} \binom{n}{k} \ T_k T_{n-k} \label{e-komatsu1}
\end{equation}
by using the exponential generating function for $T_n$, which involves the zeros of a cubic polynomial. We will give a simpler proof of Komatsu's formula, and a generalization, in Section \ref{s-komatsu}.

Prodinger  \cite{prod} proved Komatsu's formula by first computing the binomial product
\begin{align*}
\frac{x}{1-x-x^2-x^3}\odot  \frac{x}{1-x-x^2-x^3}
  &= \frac{1}{11} \biggl(\frac{1+x
+10x^2}{1-2x-4x^2-8x^3} - \frac{1+x-8x^2}{1-2x+2x^3}
\biggl).
\end{align*}
He used the following approach to computing binomial products of rational power series. Suppose we know the generating functions $\sum_{n=0}^\infty a_n x^n =A(x)$ and $\sum_{n=0}^\infty b_n x^n = B(x)$.
Then a straightforward computation gives
\begin{equation}
\label{e-prod1}
\sum_{n=0}^\infty x^n\sum_{k=0}^n \binom{n}{k} a_k \sum_{m=k}^\infty y^m b_{m-k}
  =\frac{1}{1-x}A\left(\frac{xy}{1-x}\right) B(y).
\end{equation}
The binomial convolution $\sum_{k=0}^n \binom{n}{k} a_k b_{n-k}$ is the coefficient of $x^ny^n$ in \eqref{e-prod1}, so the binomial product $A(x)\odot B(x)$ may be obtained by extracting the diagonal from \eqref{e-prod1}. For $A(x)$ and $B(x)$ rational, Prodinger did this using Hautus and Klarner's residue method \cite{hau}. We will discuss a more efficient method for computing the diagonal of \eqref{e-prod1} in Section \ref{s-pf}.

Ekhad and Zeilberger \cite{ekha} used another efficient method for computing binomial products of rational power series by solving a system of linear equations, using the fact that a proper rational power series in $x$ with a denominator of degree $d$ is  determined by the coefficients of $x^i$ for $0\le i \le 2d$.  (Recall that a proper rational function is one in which the degree of the numerator is less than the degree of the denominator.)

Cerlienco, Mignotte, and Piras \cite[Section A IV 1]{cmp} sketched another approach, using determinants, for computing binomial and Hadamard products.

Our main method for computing binomial products is based on resultants of polynomials. If the denominator of $A(x)$ is $\prod_i(1-\alpha_i x)$ and the denominator of $B(x)$ is $\prod_j (1-\beta_j x)$ then $A(x)\odot B(x)$  is a rational function  with denominator (not necessarily in lowest terms) $\prod_{i,j}(1-\alpha_i \beta_j x)$, and this product can be computed from the coefficients of the denominators of $A(x)$ and $B(x)$ in terms of a resultant,  with no need to factor these denominators into linear factors. 
We use a similar resultant method to compute the denominator of the Hadamard product of two rational generating functions in Section \ref{s-Hadamard}.

Alecci, Barbero, and Murru \cite[Theorem 5]{abm} have also observed that resultants can be used to compute binomial products.

\section{On rational power series over a field}
We first prove a useful fact that clarifies the status of rational power series over a field. Another proof was given by Klazar and  Horsk\'{y} \cite[Theorem 3]{kh}. The result can also be derived using properties of determinants; for example, Equation (10) of Cerlienco, Mignotte, and Piras \cite{cmp} yields a determinantal formula for the denominator of a rational power series in terms of its coefficients.

\begin{theorem}
\label{t-fields}
Suppose that 
\begin{equation*}
A(x)=\sum_{n=0}^\infty a_n x^n = \frac{p(x)}{q(x)},
\end{equation*}
where each $a_n$ is in a field $F$, and $p(x)$ and $q(x)$ are polynomials with coefficients in an extension field $G$ of $F$. Then there exist polynomials $P(x)$ and $Q(x)$ with coefficients in $F$ such that
\begin{equation*}
A(x) = \frac{P(x)}{Q(x)}.
\end{equation*}
\end{theorem}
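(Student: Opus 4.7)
The plan is to use the standard criterion that, for any field $K$, a power series $\sum_{n \ge 0} a_n x^n$ with $a_n \in K$ is rational over $K$ if and only if the shifted sequences $S^k a = (a_{n+k})_{n \ge 0}$, for $k \ge 0$, span a finite-dimensional $K$-subspace of $K^{\N}$. This is well known: each linear dependence among the shifts translates into a linear recurrence, and hence into a denominator polynomial, and conversely. With this criterion in hand, the theorem reduces to the following assertion: if a sequence $(a_n)$ with entries in $F$ has shifts that span a finite-dimensional $G$-subspace of $G^{\N}$, then its shifts also span a finite-dimensional $F$-subspace of $F^{\N}$.

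First, from the hypothesis $A(x) = p(x)/q(x)$ with $p,q \in G[x]$, I may assume $q(0) \ne 0$ and clear denominators to obtain a linear recurrence over $G$ satisfied by $(a_n)$ for all sufficiently large $n$. This forces the $G$-span $W_G \subseteq G^{\N}$ of the shifts of $(a_n)$ to be finite-dimensional over $G$, say of dimension $d$. My goal is then to show that the $F$-span $W_F \subseteq F^{\N}$ of the same shifts has dimension at most $d$ over $F$.

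The main step is the purely linear-algebraic lemma that any family of vectors $v_1, \ldots, v_m \in F^{\N}$ that is linearly independent over $F$ remains linearly independent over $G$ when viewed inside $G^{\N}$. To prove this, I would fix a Hamel basis $\{e_\lambda\}_{\lambda \in \Lambda}$ of $G$ over $F$ with $1$ among the basis elements, consider a hypothetical $G$-linear relation $\sum_i g_i v_i = 0$, and expand each coefficient as $g_i = \sum_\lambda c_{i,\lambda} e_\lambda$ with $c_{i,\lambda} \in F$ (almost all zero). Since every coordinate of each $v_i$ lies in $F$, equating the $e_\lambda$-components coordinate by coordinate separates the given relation into an $F$-linear dependence $\sum_i c_{i,\lambda} v_i = 0$ for each $\lambda$; $F$-linear independence of the $v_i$ then forces every $c_{i,\lambda}$, and hence every $g_i$, to vanish. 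Applied to $W_F$, this yields $\dim_F W_F \le \dim_G W_G = d$, so $W_F$ is finite-dimensional over $F$, and the criterion from the first paragraph delivers an $F$-linear recurrence, hence a representation $A(x) = P(x)/Q(x)$ with $P, Q \in F[x]$.

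I expect no serious obstacle; the argument is essentially the standard observation that rank is preserved under field extension of the scalars. An alternative, perhaps slicker, route would be to invoke Kronecker's theorem, which characterizes rationality of $A(x)$ over $K$ by finite rank of the infinite Hankel matrix $(a_{i+j})_{i,j \ge 0}$: since the entries lie in $F$, its rank over $F$ equals its rank over $G$, so finite rank over $G$ immediately gives finite rank over $F$.
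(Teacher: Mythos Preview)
Your argument is correct and rests on the same linear-algebraic core as the paper's proof: both establish that linear independence of vectors with entries in $F$ is preserved when scalars are extended to $G$, and then use this to descend a $G$-recurrence to an $F$-recurrence. The packaging differs slightly. The paper works in finite dimensions: it records the $G$-recurrence as a nonzero vector in $G^{m+1}$ orthogonal to all the windows $(a_i,\ldots,a_{i+m})\in F^{m+1}$, and a short lemma (rank is unchanged under field extension, proved there via row reduction rather than your Hamel-basis argument) produces a nonzero $F$-vector with the same property, yielding $Q(x)$ directly. You instead phrase things via the infinite-dimensional shift space $W_F\subseteq F^{\N}$ and the rationality criterion $\dim_F W_F<\infty$; this is a cleaner invariant but requires you to quote that criterion, whereas the paper's version is self-contained. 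Your Hankel-rank alternative is genuinely slicker and is exactly the observation that rank does not change under base extension, applied once.
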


In order to prove Theorem \ref{t-fields}, we first prove two lemmas from linear algebra.

\begin{lemma}
\label{l-lind} Let $F\subseteq G$ be fields. If a subset $W\!$ of $F^n$ is linearly independent over $F$ then it is linearly independent over $G$.
\end{lemma}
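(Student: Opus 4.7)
The plan is to reduce the statement to a concrete computation with determinants over $F$, and then exploit the fact that an element of $F$ is zero in $F$ if and only if it is zero in $G$.

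First I would observe that any purported linear dependence uses only finitely many vectors, so without loss of generality we may assume $W=\{w_1,\dots,w_k\}$ is finite. Form the $k\times n$ matrix $M$ whose $i$th row is $w_i$; all entries of $M$ lie in $F$. The claim that $W$ is linearly independent over $F$ is equivalent to saying that $M$ has row rank $k$ over $F$, and the claim we want is the analogous statement over $G$. So the goal reduces to showing that the row rank of a matrix with entries in $F$ does not change when we pass to the larger field $G$.

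For that, I would invoke the standard minor characterization of rank: for a matrix over any field, the row rank equals the largest $r$ such that some $r\times r$ submatrix has nonzero determinant. Applying this over $F$, the hypothesis gives some $k\times k$ submatrix $M'$ of $M$ with $\det M'\ne 0$ in $F$. But $\det M'$ is an element of $F$, and the inclusion $F\subseteq G$ is injective, so $\det M'\ne 0$ in $G$ as well. Hence $M$ also has row rank $k$ over $G$, which is exactly the $G$-linear independence of $W$.

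There is no real obstacle here; the only subtlety is choosing a characterization of linear independence that is visibly field-independent. The minor test is the cleanest such characterization, since it depends only on whether certain polynomial expressions in the entries vanish, a property that transfers between $F$ and any extension. An equivalent route, if one prefers to avoid citing the minor criterion, is to echelon-reduce $M$ using row operations whose coefficients lie in $F$; the resulting echelon form, and in particular its rank, is the same whether computed over $F$ or over $G$.
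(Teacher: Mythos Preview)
Your proof is correct. Your primary argument uses the minor characterization of rank: a nonzero $k\times k$ minor of $M$ witnesses rank $k$, and since that minor is an element of $F$, its nonvanishing persists in $G$. The paper instead argues directly via row reduction: it reduces $M$ to echelon form using operations over $F$, and observes that the number of nonzero rows is the dimension of the row space over either field. In fact, the alternative route you sketch at the end is exactly the paper's proof. Both arguments rest on the same principle---find a certificate of rank that is expressed purely in terms of elements of $F$---but yours packages it via determinants while the paper packages it via an explicit echelon form. Neither approach has a real advantage here; the minor argument is slightly slicker to state, while the echelon argument avoids quoting the minor criterion for rank.
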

\begin{proof}
Let $W\subset F^n$ be linearly independent over $F$. Since $W$ is finite, we may let the elements of $W$ be $(w_{i,1}, w_{i,2},\dots, w_{i,n})$, for $i=1,\dots, |W|$, and consider the matrix $M=(w_{i,j})_{i=1,\dots, |W|; j=1,\dots, n}$. We can convert $M$ to a matrix $M'$ in row-echelon form by operations that preserve the span of its rows over both $F$ and $G$, and the dimension of the row space of $M'$ over both $F$ and $G$
is the number of nonzero rows. \end{proof}

\begin{lemma}
\label{l-fields}
Let $F\subseteq G$ be fields. Suppose that $S$ is a set of vectors in $F^n$ and that there is a nonzero vector $t$ in $G^n$ such that $s\cdot t=0$ for all $s\in S$, where $\cdot$ is the usual dot product defined by $(s_1,\dots, s_n)\cdot (t_1,\dots, t_n) = s_1t_1+\cdots + s_n t_n$. Then there is a nonzero vector $u\in F^n$ such that $s\cdot u=0$ for all $s\in S$.
\end{lemma}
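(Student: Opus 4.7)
The plan is to combine Lemma~\ref{l-lind} with the rank--nullity theorem. The statement is essentially that the dimension of the solution space of a homogeneous linear system depends only on the system, not on the ambient field: in particular, the existence of a nonzero solution is a property of the matrix alone.

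First I would reduce to the case where $S$ is finite and $F$-linearly independent. Since $F^n$ is finite-dimensional, $S$ contains a maximal $F$-linearly independent subset $S'$ of size at most $n$. Because every $s\in S$ is an $F$-linear combination (hence also a $G$-linear combination) of elements of $S'$, the condition ``$v\cdot s = 0$ for all $s\in S$'' is equivalent to ``$v\cdot s = 0$ for all $s\in S'$,'' whether $v$ ranges over $F^n$ or over $G^n$. So I may replace $S$ with $S'$.

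Next I would form the matrix $M$ whose rows are the vectors of $S'$. By assumption these rows are $F$-linearly independent, and Lemma~\ref{l-lind} says they remain linearly independent over $G$. Hence $M$ has rank $|S'|$ over both $F$ and $G$. By rank--nullity, the null space of $M$ has dimension $n-|S'|$ whether computed over $F$ or over $G$.

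Finally, the hypothesis that there exists a nonzero $t\in G^n$ with $Mt=0$ forces $n-|S'|\ge 1$. Therefore the null space of $M$ over $F$ is also nonzero, producing the required $u\in F^n$. There is no real obstacle here: the only mildly delicate point is the reduction to a finite independent $S'$, but once Lemma~\ref{l-lind} is in hand this is routine, and the rest is immediate from rank--nullity applied uniformly over $F$ and $G$.
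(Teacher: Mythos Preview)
Your proof is correct and follows essentially the same approach as the paper's: both use Lemma~\ref{l-lind} to show that the span of $S$ has the same dimension over $F$ and over $G$, then conclude via rank--nullity (your version) or the equivalent orthogonal-complement fact (the paper's version) that the existence of a nonzero orthogonal vector is field-independent. Your explicit reduction to a finite independent subset and the matrix formulation are slightly more concrete, but the underlying argument is the same.
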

\begin{proof} 
The hypothesis implies that the dimension of the span of $S$ over $G$ is less than $n$. Since the dimension of the span of $S$ (over either $F$ or $G$) is the size of a largest linearly independent subset, Lemma \ref{l-lind} implies that the dimension of $S$ over $F$ is also less than $n$.

The lemma then follows from a basic fact of linear algebra:  If $K$ is a field and 
 $U$ is a set of vectors in $K^n$ then the dimension of the span of $U$ over $K$ is less than $n$ if and only if there is a nonzero vector $t\in K^n$ such that $u\cdot t=0$ for every $u\in U$.
\end{proof}

\begin{proof}[Proof of Theorem \ref{t-fields}]
 Let $q(x) = q_0+q_1x +\cdots+q_m x^m$. Since $q(x)\sum_{n=0}^\infty a_n x^n$ is a polynomial in $x$, it follows that for some $N$,
\[a_i q_m + a_{i+1} q_{m-1}+\cdots + a_{i+m} q_0  = 0\]
for all $i\ge N$. Then by Lemma \ref{l-fields}, there exist $Q_0,\dots, Q_m$ in $F$, not all 0, such that
\[a_i Q_m + a_{i+1} Q_{m-1}+\cdots + a_{i+m} Q_0  = 0\] for all $i\ge N$. Thus
\begin{equation*}
(Q_0+Q_1x+\cdots+Q_m x^m)A(x)
\end{equation*}
is a polynomial in $x$ with coefficients in $F$, so $A(x)$ is a quotient of polynomials with coefficients in $F$.
\end{proof}

In what follows, we will be working with power series with coefficients in a field $F$, and we will derive formulas for these power series that express them as quotients of polynomials with coefficients in an extension field of $F$. Theorem \ref{t-fields} guarantees that we can express these power series as quotients of polynomials with coefficients in $F$. Moreover, Theorem \ref{t-fields} allows us to use the term ``rational power series over the field $F$" to mean both a rational power series with coefficients in $F$ and a power series which is a quotient of polynomials with coefficients in $F$.

\section{Rationality of binomial products}

We now prove that the binomial product of two rational power series is rational. There are more direct proofs but our approach proves some useful formulas along the way. For other proofs see Fliess \cite[Proposition 3]{fliess}, Bacher \cite[Proposition 3.2]{bacher}, and  Alecci, Barbero, and Murru \cite[Theorem 5]{abm}. Another proof, pointed out by Alin Bostan,  that the binomial product of rational power series is rational follows from the fact that $\sum_{n=0}^\infty a_n x^n$ is rational if and only if $\sum_{n=0}^\infty a_n x^n/n!$ is a linear combination of series of the form $x^i e^{\alpha x}$, as these are clearly closed under multiplication.

\begin{lemma}
\label{l-bprod}
For all nonnegative integers $j$ and $k$ and all $\alpha, \beta\in F$ we have
\begin{equation}
\label{e-bprod}
\frac{x^j}{(1-\alpha x)^{j+1}}\odot \frac{x^k}{(1-\beta x)^{k+1}} = \binom{j+k}{j}\frac{x^{j+k}}{\bigl(1-(\alpha+\beta)x\bigr)^{j+k+1}}.
\end{equation}
\end{lemma}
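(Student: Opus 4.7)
The plan is to exploit the characterization of $\odot$ via exponential generating functions given in \eqref{e-gen}, which reduces the identity to an easy product of closed-form EGFs. First I would identify the coefficient sequence of $x^j/(1-\alpha x)^{j+1}$: expanding $(1-\alpha x)^{-(j+1)}=\sum_{m\ge 0}\binom{m+j}{j}\alpha^m x^m$ and shifting by $x^j$ gives $a_n=\binom{n}{j}\alpha^{n-j}$ for $n\ge j$ and $a_n=0$ otherwise. The key observation is that the EGF of this sequence collapses to a clean closed form:
$$\sum_{n\ge j}\binom{n}{j}\alpha^{n-j}\frac{x^n}{n!}=\frac{x^j}{j!}\sum_{m\ge 0}\frac{(\alpha x)^m}{m!}=\frac{x^j e^{\alpha x}}{j!}.$$

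By \eqref{e-gen}, the EGF of the binomial product on the left side of \eqref{e-bprod} is therefore simply
$$\frac{x^j e^{\alpha x}}{j!}\cdot\frac{x^k e^{\beta x}}{k!}=\frac{x^{j+k}e^{(\alpha+\beta)x}}{j!\,k!}.$$
To recover the OGF, I would read off the coefficient of $x^n/n!$ in this EGF, which is $n!\,(\alpha+\beta)^{n-j-k}/\bigl(j!\,k!\,(n-j-k)!\bigr)$ for $n\ge j+k$. Rewriting the multinomial factor as $\binom{j+k}{j}\binom{n}{j+k}$ and summing over $n$ recognises the resulting series as $\binom{j+k}{j}$ times the expansion of $x^{j+k}/\bigl(1-(\alpha+\beta)x\bigr)^{j+k+1}$, which is exactly the right-hand side of \eqref{e-bprod}.

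There is essentially no obstacle; the only care required is the factorial bookkeeping in the identity $n!/\bigl(j!\,k!\,(n-j-k)!\bigr)=\binom{j+k}{j}\binom{n}{j+k}$, together with the boundary case $n<j+k$, where both sides vanish and the identity is trivial. An alternative one-line version of the same argument would use the formula $1/(1-\gamma x)^{r+1}=\sum_n\binom{n+r}{r}\gamma^n x^n$ twice to pass directly between OGF and EGF, but the presentation via $x^j e^{\alpha x}/j!$ is cleaner and makes the appearance of the factor $(\alpha+\beta)$ transparent.
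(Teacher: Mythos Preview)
Your proof is correct and follows essentially the same approach as the paper: both compute the exponential generating function of $x^j/(1-\alpha x)^{j+1}$ as $x^j e^{\alpha x}/j!$, multiply these EGFs, and convert back via \eqref{e-gen}. The only cosmetic difference is that the paper, having established the OGF--EGF correspondence $\frac{x^k}{(1-\alpha x)^{k+1}}\leftrightarrow \frac{x^k}{k!}e^{\alpha x}$, applies it in reverse to $\binom{j+k}{j}\frac{x^{j+k}}{(j+k)!}e^{(\alpha+\beta)x}$ directly, whereas you expand the coefficients and resum; the paper also treats the $\alpha=0$ boundary explicitly via a limit rather than relying on the $0^0=1$ convention.
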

\begin{proof} 
For any $\alpha\ne0$ we define $f_n(\alpha, k)$  for $n,k\in \N$ (where $\N$ is the set of nonnegative integers) by 
$f_n(\alpha, k) = \alpha^{n-k}\binom{n}{k}$. We define $f_n(0,k)$ to be the limit
$\lim_{\alpha\to0}f_n(\alpha,k)$, so 
\begin{equation*}
f_n(0,k) = \begin{cases}
1,&\text{if $n=k$}\\
0,&\text{otherwise.}
\end{cases}
\end{equation*}
(More precisely,
for fixed $n$ and $k$, $f_n(\alpha,k)$ is a polynomial in $\alpha$. If $n<k$ then this polynomial is identically~0 and if $n\ge k$ this polynomial is a constant times a nonnegative power of $\alpha$; in all cases $f_n(0,k)$ is obtained by setting $\alpha=0$ in this polynomial.)
It is easy to check that
\begin{equation*}
\sum_{n=0}^\infty f_n(\alpha, k) x^n =\frac{x^k}{(1-\alpha x)^{k+1}}
\end{equation*}
and 
\begin{equation*}
\sum_{n=0}^\infty  f_n(\alpha, k) \frac{x^n}{n!} = \frac{x^k}{k!} e^{\alpha x}
\end{equation*}
for all $\alpha$, including $\alpha=0$.
Thus 
\begin{equation*}
\biggl(\sum_{n=0}^\infty  f_n(\alpha, j) \frac{x^n}{n!}\biggr)
\biggl(\sum_{n=0}^\infty  f_n(\beta, k) \frac{x^n}{n!}\biggr)
 = \binom{j+k}{j}\frac{x^{j+k}}{(j+k)!} e^{(\alpha+\beta) x},
\end{equation*}
and \eqref{e-bprod} follows from the equivalence of \eqref{e-oper} and \eqref{e-gen}.
\end{proof}
We note a useful consequence of \eqref{e-bprod}.

\begin{corollary}
For any power series $A(x)$ and any $\beta\in F$, we have
\begin{equation*}
A(x) \odot \frac{1}{1-\beta x} = \frac{1}{1-\beta x}A\left(\frac {x}{1-\beta x}\right).
\end{equation*}
\end{corollary}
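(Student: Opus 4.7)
The plan is to derive the formula from Lemma \ref{l-bprod} by specialization and then extend to arbitrary $A(x)$ by linearity of $\odot$. Setting $\alpha=0$ and $k=0$ in \eqref{e-bprod} (using $f_n(0,k)$ as defined in the lemma's proof) yields the single-term identity
\begin{equation*}
x^j \odot \frac{1}{1-\beta x} = \frac{x^j}{(1-\beta x)^{j+1}}.
\end{equation*}
This is the only nontrivial input; everything else is bookkeeping.

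Next I would observe that $\odot$ is bilinear and continuous in the formal-power-series topology on the left factor: the series $x^j \odot B(x)$ has no terms of degree less than $j$ (since $c_n = \binom{n}{j}b_{n-j}$ vanishes for $n<j$), so for $A(x)=\sum_{j\ge0}a_jx^j$ the sum $\sum_j a_j\bigl(x^j\odot B(x)\bigr)$ is well defined as a formal power series and equals $A(x)\odot B(x)$.

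Applying this with $B(x)=1/(1-\beta x)$ gives
\begin{equation*}
A(x) \odot \frac{1}{1-\beta x}
 = \sum_{j=0}^\infty a_j \,\frac{x^j}{(1-\beta x)^{j+1}}
 = \frac{1}{1-\beta x}\sum_{j=0}^\infty a_j \left(\frac{x}{1-\beta x}\right)^{\!j}
 = \frac{1}{1-\beta x}\,A\!\left(\frac{x}{1-\beta x}\right),
\end{equation*}
which is the desired formula. There is no real obstacle here; the only point one must be slightly careful about is justifying the interchange of $\odot$ with the infinite sum $\sum_j a_j x^j$, which is handled by the order-of-vanishing remark above.
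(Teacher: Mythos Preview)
Your proof is correct and follows essentially the same approach as the paper: reduce to the case $A(x)=x^j$ via linearity, and identify that case as the specialization $\alpha=0$, $k=0$ of Lemma~\ref{l-bprod}. The paper compresses your careful continuity argument into the single word ``linearity,'' but your explicit justification of the infinite-sum interchange is a welcome clarification rather than a different route.
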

\begin{proof}
By linearity, it is sufficient to prove the formula for $A(x) = x^j$, which is the case $\alpha=0$, $k=0$ of \eqref{e-bprod}.
\end{proof}

\begin{theorem} \label{t-rbc}
Let $A(x)$ and $B(x)$ be rational power series over $F$. We may express $A(x)$ 
as
\begin{equation*}
A(x) = \frac{R(x)}{\prod_{i=1}^m (1-\alpha_i x)},
\end{equation*}
where $R(x)$ is a polynomial of degree less than $m$ and the $\alpha_i$ lie in some extension field of $F$,  but they need not be distinct nor nonzero.
We may express $B(x)$ similarly as 
\begin{equation*}
B(x) = \frac{S(x)}{\prod_{j=1}^n (1-\beta_j x)}.
\end{equation*}
Then 
\begin{equation}
\label{e-AB1}
 A(x)\odot B(x) = 
 \frac{T(x)}{\prod_{i=1}^m\prod_{j=1}^n\bigl(1-(\alpha_i+\beta_j)x\bigr)}.
\end{equation}
for some polynomial $T(x)$ of degree at most $mn-1$.
\end{theorem}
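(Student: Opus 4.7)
The plan is to reduce the theorem to Lemma \ref{l-bprod} by expanding $A(x)$ and $B(x)$ in partial fractions using the basis that appears there. First, group the parameters by value: let $\alpha_{(1)},\dots,\alpha_{(p)}$ be the distinct elements among $\alpha_1,\dots,\alpha_m$ with multiplicities $r_1,\dots,r_p$ (so $\sum_i r_i=m$), and let $\beta_{(1)},\dots,\beta_{(q)}$ similarly have multiplicities $s_1,\dots,s_q$ with $\sum_j s_j=n$. Because $\deg R<m$, a standard partial fraction decomposition of $A(x)$ (with a polynomial part absorbed into the basis when some $\alpha_{(i)}=0$), composed with the triangular change of basis from $\{1/(1-\alpha_{(i)}x)^{k+1}\}_{0\le k< r_i}$ to $\{x^k/(1-\alpha_{(i)}x)^{k+1}\}_{0\le k< r_i}$, writes
\[
A(x)=\sum_{i=1}^p\sum_{k=0}^{r_i-1} a_{i,k}\,\frac{x^k}{(1-\alpha_{(i)}x)^{k+1}}
\]
for some $a_{i,k}$ in an extension of $F$, and expands $B(x)$ analogously with coefficients $b_{j,l}$.

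Next, I apply Lemma \ref{l-bprod} to each pair of terms and use the bilinearity of $\odot$ to obtain
\[
A(x)\odot B(x)=\sum_{i,j,k,l} a_{i,k}\,b_{j,l}\,\binom{k+l}{k}\,\frac{x^{k+l}}{\bigl(1-(\alpha_{(i)}+\beta_{(j)})x\bigr)^{k+l+1}}.
\]
Each summand is a proper rational function, with numerator degree $k+l$ and denominator degree $k+l+1$.

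It remains to check that $\prod_{i,j}(1-(\alpha_i+\beta_j)x)$ serves as a common denominator. In this product, the factor $(1-(\alpha_{(i)}+\beta_{(j)})x)$ appears with multiplicity at least $r_i s_j$, while the $(i,j,k,l)$ summand uses multiplicity $k+l+1\le r_i+s_j-1$. The inequality $r_i s_j\ge r_i+s_j-1$, equivalent to $(r_i-1)(s_j-1)\ge 0$, shows that the claimed denominator is sufficient, and clearing denominators produces a polynomial $T(x)$. Since each summand is proper, so is the whole sum over the common denominator of degree $mn$, giving $\deg T\le mn-1$ and establishing \eqref{e-AB1}.

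The crux of the argument is the inequality $(r_i-1)(s_j-1)\ge 0$ that lets the denominator multiplicities on the right absorb those created on the left; everything else is bookkeeping. The possibility that some $\alpha_{(i)}=0$ or $\beta_{(j)}=0$ is the main technical nuisance, but it is already accommodated by Lemma \ref{l-bprod} via the $f_n(0,k)$ convention and by the basis above via a polynomial part.
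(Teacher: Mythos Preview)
Your proof is correct and follows essentially the same approach as the paper's: both obtain the expansion $A(x)=\sum_{i,k} a_{i,k}\,x^k/(1-\alpha_{(i)}x)^{k+1}$, apply Lemma~\ref{l-bprod} termwise, and then verify that the product denominator $\prod_{i,j}(1-(\alpha_i+\beta_j)x)$ suffices via the inequality $r_is_j\ge r_i+s_j-1$ (in the paper's notation $e_i=r_i-1$, $f_j=s_j-1$, so this is $(e_i+1)(f_j+1)\ge e_i+f_j+1$, i.e.\ $e_if_j\ge 0$). The only cosmetic difference is that the paper obtains the partial-fraction expansion in the basis $x^k/(1-a_ix)^{k+1}$ by first decomposing $A(1/x)/x$ and substituting back, whereas you invoke a triangular change of basis; both are routine.
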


\begin{proof}
We first find the usual partial fraction decomposition of $A(1/x)/x$,
\[A(1/x)/x = U(x) +\sum_{i=1}^{M} \sum_{k=0}^{e_i}\frac{A_{ik}}{(x-a_i)^{k+1}}\]
where $U(x)$ is a polynomial,  the $a_i$  and $A_{ik}$ lie in some extension field of $F$,  the $a_i$ are distinct (one of them may be 0), and the $e_i$ are nonnegative integers.
Replacing $x$ with $1/x$ and dividing by $x$ gives us 
\begin{equation}
\label{e-pfA}
A(x) = \frac{U(1/x)}{x}+\sum_{i=1}^{M}\sum_{k=0}^{e_i} \frac{A_{ik}x^{k}}{(1-a_i x)^{k+1}}.
\end{equation}
Since $A(x)$ has a power series expansion, $U(x)$ must be $0$. 
(We note that $A(x)$ is proper if and only if $a_i\ne 0$ for all $i$.)

Similarly we have the partial fraction expansion
\[B(x) = \sum_{j=1}^{N}\sum_{l=0}^{f_j} \frac{B_{j l}x^{l}}{(1-b_j x)^{l+1}}.\] 
Then
\begin{align}
    A(x)\odot B(x) &= 
    \sum_{i=1}^M\sum_{j=1}^N \sum_{k=0}^{e_i}\sum_{l=0}^{f_j}
     A_{ik} B_{j l}\binom{k+l}{k}\frac{x^{k+l}}{\bigl(1-(a_i+b_j)x\bigr)^{k+l+1}}
     \text{ by   \eqref{e-bprod}}\notag \\
     &=\sum_{i=1}^M\sum_{j=1}^N \frac{P_{ij}(x)}{\bigl(1-(a_i+b_j)x\bigr)^{e_i+f_j+1}}\label{e-AB3}
\end{align}
where $P_{ij}(x)$  is a polynomial of degree at most $e_i+f_j$.

It follows from \eqref{e-AB3} that 
\begin{equation}
\label{e-AB4}
A(x) \odot B(x) = \frac{Q(x)}{\prod_{i=1}^M\prod_{j=1}^N \bigl(1-(a_i+b_j)x\bigr)^{e_i+f_j+1}}
\end{equation}
for some polynomial $Q(x)$ of degree less than
$\sum_{i=1}^M \sum_{j=1}^N (e_i+f_j+1)$.
In particular, $ A(x)\odot B(x)$ is rational. 

Now choose a sequence $\alpha_1, \dots, \alpha_m$, where  $m=\sum_{i=1}^M (e_i+1)$, consisting of $e_i$ occurrences of $a_i$ for each $i$, so that $\prod_{i=1}^m (1-\alpha_i x)=\prod_{i=1}^M(1-a_i x)^{e_i+1}$ and similarly choose $\beta_1,\dots, \beta_n$, where $n=\sum_{j=1}^N (f_j+1)$, so that 
$\prod_{j=1}^n (1-\beta_j x)=\prod_{j=1}^N(1-b_j x)^{f_j+1}$. Then 
\begin{equation*}
\prod_{i=1}^m\prod_{j=1}^n\bigl(1-(\alpha_i+\beta_j)x\bigr)
  =\prod_{i=1}^M\prod_{j=1}^N\bigl(1-(a_i+b_j)x\bigr)^{(e_i+1)(f_j+1)}.
\end{equation*}
Since $(e_i+1)(f_j+1)- (e_i+f_j+1) = e_i f_j\ge0$, multiplying the numerator and denominator of the right side of  \eqref{e-AB4} by $\prod_{i=1}^M\prod_{j=1}^N\bigl(1-(a_i+b_j)x\bigr)^{e_i f_j}$ gives the desired representation \eqref{e-AB1}.
\end{proof}

We may restate Theorem \ref{t-rbc} in a less elegant but more computationally practical way by separating out the $\alpha_i$ and $\beta_j$ that are equal to 0. 

\begin{corollary}
\label{c-main}
Let
\begin{equation*}
A(x) = \frac{R(x)}{\prod_{i=1}^m (1-\alpha_i x)},
\end{equation*}
and
\begin{equation*}
B(x) = \frac{S(x)}{\prod_{j=1}^n (1-\beta_j x)},
\end{equation*}
where $R(x)$ and $S(x)$ are polynomials and
the $\alpha_i$ and $\beta_j$ are all nonzero. Let \[u=\max\bigl(\deg R(x)+1-m,0\bigr)\textnormal{ and }v=\max\bigl(\deg S(x)+1-n,0\bigl).\]
Then
\begin{equation}
\label{e-AB5}
 A(x)\odot B(x) = 
 \frac{T(x)}{\bigl(\prod_{i=1}^m(1-\alpha_i x)\bigr)^v\bigl( \prod_{j=1}^n(1-\beta_j x)\bigr)^u
 \prod_{i=1}^m\prod_{j=1}^n\bigl(1-(\alpha_i+\beta_j)x\bigr)}.
\end{equation}
for some polynomial $T(x)$ of degree less than $(u+m)(v+n)$.
\end{corollary}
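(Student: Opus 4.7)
The plan is to deduce Corollary \ref{c-main} from Theorem \ref{t-rbc} by padding the representations of $A(x)$ and $B(x)$ with enough zero roots to meet the theorem's degree hypothesis (numerator degree strictly less than denominator degree). Specifically, I would introduce $m + u$ scalars $\alpha'_1, \ldots, \alpha'_{m+u}$, taking $\alpha'_i = \alpha_i$ for $i \leq m$ and $\alpha'_i = 0$ for $m < i \leq m + u$. Since $(1 - 0 \cdot x) = 1$, the denominator is unchanged, so $A(x) = R(x)/\prod_{i=1}^{m+u}(1 - \alpha'_i x)$, and by the definition of $u$ we have $\deg R < m + u$ (this is automatic if $u = 0$, and is the identity $\deg R = m + u - 1$ otherwise). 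The analogous padding of the $\beta$'s by $v$ zeros gives a representation of $B(x)$ with $\deg S < n + v$.

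Next I would apply Theorem \ref{t-rbc} directly to these padded representations. The theorem yields
\[
A(x) \odot B(x) = \frac{T(x)}{\prod_{i=1}^{m+u}\prod_{j=1}^{n+v}\bigl(1 - (\alpha'_i + \beta'_j)x\bigr)}
\]
for some $T(x)$ of degree less than $(m+u)(n+v)$, which is exactly the degree bound claimed in the corollary.

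The remaining work is to split the double product in the denominator into four pieces according to whether $\alpha'_i$ and $\beta'_j$ are original or padded. For $i \leq m$ and $j \leq n$, the factors contribute $\prod_{i=1}^{m}\prod_{j=1}^{n}\bigl(1 - (\alpha_i + \beta_j)x\bigr)$. For $i \leq m$, $j > n$, we have $\beta'_j = 0$, so the factor collapses to $(1 - \alpha_i x)$ and the product over these pairs is $\bigl(\prod_{i=1}^{m}(1 - \alpha_i x)\bigr)^v$. Symmetrically, for $i > m$, $j \leq n$ we obtain $\bigl(\prod_{j=1}^{n}(1 - \beta_j x)\bigr)^u$. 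Finally, the pairs with $i > m$ and $j > n$ contribute only trivial factors of $1$. Multiplying the four pieces yields the denominator in \eqref{e-AB5} exactly. I do not anticipate any real obstacle here; the argument is pure bookkeeping that separates zero from nonzero roots in the denominator supplied by Theorem \ref{t-rbc}.
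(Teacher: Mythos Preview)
Your proposal is correct and matches the paper's intent exactly: the paper presents Corollary~\ref{c-main} as a restatement of Theorem~\ref{t-rbc} obtained by ``separating out the $\alpha_i$ and $\beta_j$ that are equal to $0$,'' and your padding-by-zero-roots argument is precisely the way to make that separation explicit. The bookkeeping you describe---splitting the double product over $(i,j)$ into four blocks according to whether each index lies in the original or padded range---recovers the claimed denominator and degree bound without any gaps.
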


Note that $A(x)\odot B(x)$ need not be proper even if $A(x)$ and $B(x)$ are; for example by Lemma \ref{e-bprod} we have
\begin{equation*}
\frac{x^{j}}{(1-x)^{j+1}}\odot \frac{x^k}{(1+x)^{k+1}}=\binom{j+k}{j}x^{j+k}.
\end{equation*}

As is clear from the proof of Theorem \ref{t-rbc}, the denominator in lowest terms of $A(x)\odot B(x)$ will in many cases be a proper divisor of that given by \eqref{e-AB1} or \eqref{e-AB5}, but these formulas, together with Theorem \ref{t-res1}, which allows us to compute $\prod_{i=1}^m\prod_{j=1}^n\bigl(1-(\alpha_i+\beta_j)x\bigr)$ without factoring the denominators of $A(x)$ and $B(x)$, are efficient enough in all of our applications.

One important special case is when $A(x)$ and $B(x)$ have the same denominator. If we assume, for simplicity, that $A(x)$ and $B(x)$ are proper and have no repeated zeros, then the partial fraction expansion \eqref{e-AB3} shows that $A(x)\odot B(x)$ can be written with denominator  $\prod_{i=1}^n (1-2\alpha_i x) \prod_{1\le i<j\le n}\bigl(1-(\alpha_i+\alpha_j) x\bigr)$.

As a first example, with proper rational functions and no repeated factors, we have 
\begin{equation*}
\frac{x}{(1-x)(1-2x)}\odot \frac{x}{(1-3x)(1-5x)} = \frac{x^2(2-11x)}{(1-4x)(1-5x)(1-6x)(1-7x)}.
\end{equation*}
An example with an improper rational function is 
\begin{equation*}
\frac{x^3}{1-x}\odot \frac{1}{1-2x} = \frac{x^3}{(1-2x)^3(1-3x)}.
\end{equation*}
With the notation of Corollary \ref{c-main}, here we have $m=1$, $n=1$, $u=3$, and $v=0$.
Finally, a more complicated example is 
\begin{equation*}
\frac{x^2}{(1-x)^2} \odot \frac{x^2}{(1-2x)^2} = \frac{x^4(6-30 x +49 x^{2}-27 x^{3})}{(1-x)^2(1-2x)^2(1-3x)^3}.
\end{equation*}
Here the denominator given by Corollary \ref{c-main} is $(1-x)^2(1-2x)^2(1-3x)^4$ but because of the multiple factors in the denominators $(1-x)^2$ and $(1-2x)^2$, the actually denominator is a proper divisor of this product.

An alternative approach to binomial products of improper rational power series is to decompose a rational function as a polynomial plus a proper rational function. For proper rational power series we can take all the $\alpha_i$ and $\beta_j$ in Theorem \ref{t-rbc} to be nonzero; equivalently, in Corollary \ref{c-main} we have $u=v=0$. We then need to compute binomial products of polynomials with rational power series. 
By linearity, it is sufficient to compute $x^m\odot A(x)$ in terms of $A(x)$. The next lemma tells us how to do this.
\begin{lemma}
Let $A(x)$ be a power series in $x$. Then for any nonnegative integer $m$ we have
\begin{equation}
\label{e-poly}
x^m\odot A(x) = \frac{x^m}{m!}\frac{d^m\ }{dx^m}\bigl(x^m A(x)\bigr).
\end{equation}
\end{lemma}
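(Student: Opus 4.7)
The plan is to reduce the identity to monomials by linearity and then verify it directly in that case. Both maps $A(x) \mapsto x^m \odot A(x)$ and $A(x) \mapsto \frac{x^m}{m!}\frac{d^m}{dx^m}\bigl(x^m A(x)\bigr)$ are $F$-linear operators on $F[[x]]$: the first is bilinear by the definition of $\odot$, and the second is a composition of multiplication and differentiation. Moreover, both are continuous in the $x$-adic topology, since the coefficient of $x^n$ on each side of \eqref{e-poly} depends only on finitely many $a_j$ (in fact, only on $a_{n-m}$). Consequently, to verify \eqref{e-poly} it is enough to check it for $A(x) = x^k$ as $k$ ranges over the nonnegative integers.

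For $A(x) = x^k$, the left side is $x^m \odot x^k$, which is the special case $\alpha = \beta = 0$, $j = m$ of Lemma~\ref{l-bprod}, giving $x^m \odot x^k = \binom{m+k}{m} x^{m+k}$. For the right side we compute
\begin{equation*}
\frac{x^m}{m!}\frac{d^m}{dx^m}\bigl(x^m \cdot x^k\bigr) = \frac{x^m}{m!}\cdot\frac{(m+k)!}{k!}\,x^k = \binom{m+k}{m} x^{m+k},
\end{equation*}
and the two sides agree. There is no real obstacle: the identity is essentially a restatement of the monomial case of Lemma~\ref{l-bprod} in differential-operator form. As a sanity check, one can also compare coefficients directly without passing through Lemma~\ref{l-bprod}: writing $A(x) = \sum_j a_j x^j$, the coefficient of $x^n$ in $x^m \odot A(x)$ is $\binom{n}{m} a_{n-m}$ (for $n \ge m$, else $0$), while term-by-term differentiation of $x^m A(x) = \sum_j a_j x^{j+m}$ gives $\frac{d^m}{dx^m}(x^m A(x)) = \sum_j a_j \frac{(j+m)!}{j!} x^j$, so the coefficient of $x^n$ on the right of \eqref{e-poly} is $\frac{1}{m!}\cdot\frac{n!}{(n-m)!}\,a_{n-m} = \binom{n}{m} a_{n-m}$ as required.
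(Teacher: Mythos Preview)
Your proof is correct and follows essentially the same approach as the paper: reduce to monomials by linearity and verify the identity directly for $A(x)=x^k$. The paper's proof is a bit terser, omitting the continuity remark and the direct coefficient check, but the idea is identical.
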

\begin{proof}
By linearity, it is sufficient to prove \eqref{e-poly} in the case $A(x)=x^n$. In this case we have
\begin{equation*}
\frac{x^m}{m!}\frac{d^m\ }{dx^m}\bigl(x^m A(x)\bigr) 
  =\frac{x^m}{m!}\frac{d^m\ }{dx^m}x^{m+n}=\binom{m+n}{m} x^{m+n}=x^m \odot A(x).
  \qedhere
\end{equation*}
\end{proof}

\section{Resultants}
\label{s-result}
\subsection{The resultant of two polynomials}
Given two polynomials \[A(x) = \sum_{i=0}^m a_i x^i = a_m\prod_{i=1}^m (x-\alpha_{i})\] and \[B(x) = \sum_{j=0}^n b_j x^{k} = b_n\prod_{j=1}^n (x-\beta_{j}),\] 
their resultant with respect  to the variable $x$ may be 
defined by
\begin{equation} \label{e-res}
   \Res(A(x),B(x),x) = a_m^n b_n^m \prod_{i,j} (\alpha_i-\beta_j).
\end{equation}
It is well known \cite{syl} that $\Res(A(x),B(x),x)$ can be computed as a determinant.
\begin{lemma}
\label{l-res}
The resultant can be computed by 
\begin{equation}
\label{e-Sylv}
\Res(A(x),B(x),x)=\det\Syl(A(x),B(x)),
\end{equation}
where $\Syl(A(x),B(x))$ is the Sylvester matrix of size $(n+m) \times (n+m)$,
\[
\Syl(A(x),B(x))=\left[\phantom{\begin{matrix}\alpha_0\\ \ddots\\\alpha_0\\\beta_0\\ \ddots\\\beta_0 \end{matrix}}
\right.\hspace{-1.5em}
\underbrace{\begin{matrix}
a_m & \cdots & a_0 & \\
\ddots & & \ddots & \\
& a_m & \cdots & a_0 \\
b_n & \cdots & b_0 & \\
\ddots & & \ddots & \\
&b_n & \cdots & b_0
\end{matrix}}_{m+n}
\hspace{-1.5em}
\left.\phantom{\begin{matrix}\alpha_0\\ \ddots\\\alpha_0\\\beta_0\\ \ddots\\\beta_0 \end{matrix}}\right]\hspace{-1em}
\begin{tabular}{l}
$\left.\lefteqn{\phantom{\begin{matrix} \alpha_0\\ \ddots\\ \alpha_0\ \end{matrix}}}\right\}n$\\
$\left.\lefteqn{\phantom{\begin{matrix} \beta_0\\ \ddots\\ \beta_0\ \end{matrix}}} \right\}m$
\end{tabular}
\]
\end{lemma}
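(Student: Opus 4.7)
The plan is to establish the identity by multiplying $\Syl(A(x),B(x))$ on the right by a suitable Vandermonde matrix, reducing the product to a block anti-diagonal form whose determinant is computable in closed form. After dividing by the Vandermonde determinant, the claimed formula emerges directly, and this bypasses any need to understand the Sylvester matrix intrinsically.

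First, I would note that the rows of $\Syl(A(x),B(x))$ are the coefficient vectors, in the basis $x^{m+n-1},\dots,x,1$, of the polynomials $x^{n-1}A(x),x^{n-2}A(x),\dots,A(x),x^{m-1}B(x),\dots,B(x)$. Next, I would form the $(m+n)\times(m+n)$ matrix $V$ whose $(i,j)$ entry is $\gamma_j^{m+n-i}$, where $(\gamma_1,\dots,\gamma_{m+n})$ is the concatenation $(\alpha_1,\dots,\alpha_m,\beta_1,\dots,\beta_n)$; this is a row-reversed Vandermonde matrix, whose determinant is a signed product of the pairwise differences $\gamma_i-\gamma_j$, and the latter factor into Vandermondes in the $\alpha$'s and $\beta$'s together with $\prod_{i,j}(\alpha_i-\beta_j)$.

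The key observation is that the $(i,j)$ entry of the product $\Syl(A(x),B(x))\cdot V$ is the $i$-th row polynomial evaluated at $\gamma_j$. Since $A(\alpha_j)=0$ and $B(\beta_k)=0$, the product is block anti-diagonal,
\[
\Syl(A(x),B(x))\cdot V = \begin{pmatrix} 0 & P \\ Q & 0 \end{pmatrix},
\]
with $P$ the $n\times n$ matrix with $(i,j)$ entry $\beta_j^{n-i}A(\beta_j)$ and $Q$ the $m\times m$ matrix with $(k,j)$ entry $\alpha_j^{m-k}B(\alpha_j)$. Factoring $A(\beta_j)$ out of the $j$-th column of $P$ leaves a row-reversed Vandermonde in the $\beta_j$'s, and symmetrically for $Q$. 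Substituting $A(\beta_j)=a_m\prod_i(\beta_j-\alpha_i)$ and $B(\alpha_j)=b_n\prod_k(\alpha_j-\beta_k)$ converts $\det P\cdot\det Q$ into a signed expression containing the factor $a_m^n b_n^m\bigl(\prod_{i,j}(\alpha_i-\beta_j)\bigr)^2$ together with Vandermonde factors in the $\alpha$'s and $\beta$'s.

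Finally, using $\det(\Syl\cdot V)=(-1)^{mn}\det P\cdot\det Q$ and dividing by $\det V$ cancels all Vandermonde factors and one copy of $\prod_{i,j}(\alpha_i-\beta_j)$, leaving exactly $a_m^n b_n^m\prod_{i,j}(\alpha_i-\beta_j)=\Res(A(x),B(x),x)$. This step requires $\det V\neq 0$, i.e., that the roots be distinct; the general case follows because both sides of \eqref{e-Sylv} are polynomials in the coefficients of $A$ and $B$, so an identity valid on the Zariski-dense locus of distinct roots extends to all coefficient choices. The main obstacle is a careful audit of the four sign contributions---from the block anti-diagonal swap and from the row reversals that convert the matrices $(\gamma_j^{m+n-i})$, $(\beta_j^{n-i})$, and $(\alpha_j^{m-k})$ into standard Vandermondes---but a short calculation shows these contributions combine to the exponent $mn+\tfrac{n(n-1)}{2}+\tfrac{m(m-1)}{2}+mn-\tfrac{(m+n)(m+n-1)}{2}-mn=0$, so they cancel exactly.
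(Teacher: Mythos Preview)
Your proof is correct, and in fact goes beyond what the paper does: the paper does not prove this lemma at all, but simply states it as well known, citing Gel'fand--Kapranov--Zelevinsky \cite{syl}. Your Vandermonde-multiplication argument is one of the standard proofs of the Sylvester determinant formula; the interpretation of the rows of $\Syl(A,B)$ as coefficient vectors of $x^{n-i}A(x)$ and $x^{m-k}B(x)$ is right, the block anti-diagonal structure of $\Syl(A,B)\cdot V$ follows exactly as you describe, and your sign audit checks out (using $(m+n)(m+n-1)/2 = m(m-1)/2 + n(n-1)/2 + mn$, the six contributions indeed collapse to an even exponent). The density argument to remove the distinct-roots hypothesis is also sound, since both sides of \eqref{e-Sylv} are polynomials in the coefficients $a_i$ and $b_j$. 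So you have supplied a self-contained proof where the paper offers only a reference.
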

Note that unlike  formula \eqref{e-res}, the determinant formula \eqref{e-Sylv} expresses the resultant in terms of the coefficients of $A$ and $B$.

\subsection{Computing the binomial product with resultants}
\label{s-comput}

We now describe how to compute the product $\prod_{i=1}^m\prod_{j=1}^n\bigl(1-(\alpha_i+\beta_j)x\bigr)$ in Corollary \ref{c-main} without factoring the denominators.

\begin{theorem}
\label{t-res1}
Let $U(x) = \prod_{i=1}^m (1-\alpha_i x)$ and $V(x) = \prod_{j=1}^n (1-\beta_j x)$. Then
\begin{equation*}
\prod_{i=1}^m \prod_{j=1}^n \bigl(1- (\alpha_i+\beta_j) x\bigr) 
     =(-1)^{mn}\Res\left((1-y)^m U\Bigl(\frac{x}{1-y}\Bigr), y^nV\Bigl(\frac{x}{y}\Bigr),y\right).
\end{equation*}
\end{theorem}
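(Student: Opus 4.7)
The plan is to treat both arguments of the resultant as polynomials in $y$ (with $x$ regarded as a parameter in the coefficient field) and compute the resultant directly from its product definition \eqref{e-res}, extracting leading coefficients and roots from the given factorizations of $U$ and $V$.

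First I would substitute the factored forms into the two expressions. Writing $U(x) = \prod_{i=1}^m (1-\alpha_i x)$ gives
\begin{equation*}
(1-y)^m U\!\left(\frac{x}{1-y}\right) = \prod_{i=1}^m\bigl((1-y)-\alpha_i x\bigr) = \prod_{i=1}^m\bigl((1-\alpha_i x)-y\bigr),
\end{equation*}
so this is a polynomial of degree $m$ in $y$ with leading coefficient $(-1)^m$ and roots (in $y$) equal to $1-\alpha_1 x,\dots,1-\alpha_m x$. Similarly,
\begin{equation*}
y^n V\!\left(\frac{x}{y}\right) = \prod_{j=1}^n(y-\beta_j x),
\end{equation*}
a polynomial of degree $n$ in $y$ with leading coefficient $1$ and roots $\beta_1 x,\dots,\beta_n x$.

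Next I would apply the product formula \eqref{e-res} for the resultant with respect to $y$. With $a_m=(-1)^m$, $b_n=1$, and the roots listed above, this gives
\begin{equation*}
\Res\!\left((1-y)^m U\!\Bigl(\tfrac{x}{1-y}\Bigr),\, y^n V\!\Bigl(\tfrac{x}{y}\Bigr),\, y\right)
= \bigl((-1)^m\bigr)^{n}\cdot 1^{m}\!\prod_{i=1}^m\prod_{j=1}^n\bigl((1-\alpha_i x)-\beta_j x\bigr).
\end{equation*}
Pulling out the sign $(-1)^{mn}$ and rewriting $(1-\alpha_i x)-\beta_j x = 1-(\alpha_i+\beta_j)x$ yields the claimed identity after multiplying both sides by $(-1)^{mn}$.

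There is no real obstacle here beyond bookkeeping: the main thing to be careful about is that $(1-y)^m U(x/(1-y))$ is a polynomial in $y$ (not a rational function), which is why the $(1-y)^m$ factor is present, and that its leading coefficient in $y$ is $(-1)^m$ rather than $1$, producing the $(-1)^{mn}$ prefactor on the right-hand side. Everything else is immediate from \eqref{e-res}. A brief remark might be added pointing out that, while the computation is done treating $x$ as a scalar, the identity is an identity of polynomials in $x$, so by Lemma~\ref{l-res} the resultant may equivalently (and more practically) be computed as the determinant of the Sylvester matrix whose entries are polynomials in $x$.
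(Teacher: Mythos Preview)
Your proof is correct and follows essentially the same route as the paper: factor both arguments as products of linear terms in $y$, read off the leading coefficients and roots, and apply the product definition \eqref{e-res} of the resultant. The only difference is expository---you spell out the leading coefficients and roots explicitly---but the argument is the same.
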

\begin{proof}
We have
\begin{equation*}
(1-y)^m U\Bigl(\frac{x}{1-y}\Bigr)=\prod_{i=1}^m (1-y-\alpha_i x)=(-1)^m\prod_{i=1}^m \bigl(y-(1-\alpha_i x)\bigr)
\end{equation*}
and
\begin{equation*}
y^n V(x/y) = \prod_{j=1} (y-\beta_j x),
\end{equation*}
so the result follows from \eqref{e-res}.
\end{proof}

Corollary \ref{c-main} and Theorem \ref{t-res1} together give us a procedure for computing binomial products of rational power series that can be efficiently implemented on a computer algebra system. By Corollary \ref{c-main} we can express the binomial convolution $A(x)\odot B(x)$ as a quotient of polynomials $T(x)/D(x)$, where $D(x)$ may be computed explicitly with the help of Theorem \ref{t-res1}, and we have a bound on the degree of $T(x)$, $\deg T(x)\le d$. Then $T(x)$ can be computed from  \[T(x) = \bigl(A(x)\odot B(x)\bigr) D(x)\]
by computing the coefficients up to $x^d$ on the right side.

As a simple example, let us compute in detail the binomial product of the generating functions for the Fibonacci and Pell sequences. 
The generating function for the Fibonacci sequence is $A(x) = {x}/(1-x-x^2)$ and that for the Pell sequence is $B(x) = x/(1-2x-x^2)$.  Since both are proper, by Corollary \ref{c-main} we know that the binomial product $A(x)\odot B(x)$ may be expressed as $T(x)/D(x)$ where $T(x)$ has degree at most 3 and $D(x)$ is a polynomial of degree~4 computed by Theorem \ref{t-res1}. 

With the notation of Theorem \ref{t-res1}, we have $m=n=2$,  $U(x)= 1-x-x^2$, and $V(x) = 1-2x-x^2$, so 
\begin{equation*}
(1-y)^2 U\left(\frac{x}{1-y}\right) = y^2 +(x-2)y-x^2-x-1
\end{equation*}
and
\begin{equation*}
y^2 V(x/y) = y^2-2xy - x^2.
\end{equation*}

The Sylvester matrix  is 
\[\begin{bmatrix}
1 & x-2 & -x^2-x+1 & 0\\
0 & 1 & x-2 & -x^2-x+1 \\
1 & -2x & -x^2 & 0 \\
0 & 1 & -2x & -x^2
\end{bmatrix}
\]
with determinant $D(x)=1-6x+7x^2+6x^3 -9x^4$.
We compute directly that
\begin{equation*}
A(x)\odot B(x) = 2x^2+9x^3+40x^4+\cdots
\end{equation*}
So the numerator is \[T(x) = (2x^2+9x^3+40x^4+\cdots)(1-6x+7x^2+6x^3-9x^4) = 2x^2-3x^3.\]
Thus
\begin{equation*}
\frac{x}{1-x-x^2}\odot \frac{x}{1-2x-x^2} = \frac{2x^2-3x^3}{1-6x+7x^2+6x^3-9x^4}.
\end{equation*}

\section{Examples}
\label{s-ex}

\subsection{Fibonacci numbers}
We first discuss some binomial convolution formulas for Fibonacci numbers. We omit the details of the computations of binomial products, which are done by the computer using the procedure described in Section \ref{s-comput}, together with partial fraction expansion.

Recall that the Fibonacci numbers have the generating function
\begin{equation}
\label{e-Fgf}
\sum_{n=0}^\infty F_n x^n = \frac{x}{1-x-x^2}
\end{equation}
and the Lucas numbers $L_n=F_{n-1} + F_{n+1}$ have the generating function
\begin{equation*}
\sum_{n=0}^\infty L_n x^n = \frac{2-x}{1-x-x^2}
\end{equation*}
Let us first prove the identity of Church and Bicknell \cite{church} mentioned in Section \ref{s-cbc}. 
(See also Prodinger \cite{prod}.)
We find that 
\begin{equation*}
 \frac{x}{1-x-x^2}\odot \frac{x}{1-x-x^2}= \frac{2x^2}{1-3 x -2 x^{2}+4 x^{3}}
 =\frac15\left(\frac{2-2x}{1-2x-4x^2}-\frac{2}{1-x}\right).
\end{equation*}
Since 
\begin{equation*}
\frac{2-2x}{1-2x-4x^2}=\frac{2-(2x)}{1-(2x) - (2x)^2} = \sum_{n=0}^\infty 2^n L_n x^n,
\end{equation*}
we have
\begin{equation}
\label{e-cb}
\sum_{k=0}^n\binom nk F_k F_{n-k} = \frac15(2^n L_n -2).
\end{equation}
This identity may be generalized in many ways. We describe one generalization, based on the multisected Fibonacci generating functions \cite{hoggatt}
\begin{equation}
\label{e-multiF}
\sum_{n=0}^\infty F_{pn+q}x^n = \frac{F_q + (-1)^qF_{p-q}x}{1-L_px+(-1)^px^2}
\end{equation}
and 
\begin{equation}
\label{e-multiL}
\sum_{n=0}^\infty L_{pn+q}x^n = \frac{L_q - (-1)^q L_{p-q}x}{1-L_px+(-1)^px^2}.
\end{equation}
Here $p$ and $q$ may be arbitrary integers, and the Fibonacci and Lucas numbers are extended to negative integer indices by $F_{-n}=(-1)^{n-1}F_n$ and $L_{-n}=(-1)^n L_n$.
Letting $f(x) = x/(1-x-x^2)$, we find that

\begin{equation}
\label{e-fab}
5f(ax)\odot f(bx)=
\frac{2-\left(a +b \right) x}{1-\left(a +b \right) x -\left(a +b \right)^{2} x^{2}}
-\frac{2-\left(a +b \right) x}{1-\left(a +b \right) x-\left(a^{2}-3 a b +b^{2}\right) x^{2}}.
\end{equation}
The first term on the right in \eqref{e-fab} is 
\begin{equation*}
\sum_{n=0}^\infty (a+b)^n L_n x^n.
\end{equation*}
We would like to choose $a$ and $b$ so that the denominator in the second term on the right in \eqref{e-fab} is $1-L_p(cx) + (-1)^p (cx)^2$ for some $p$ and $c$, so that we can apply \eqref{e-multiF} or \eqref{e-multiL}. A calculation that we omit suggests that we should take $a=F_{p-1}$ and $b=F_{p+1}$ for some~$p$. Once we have these values for $a$ and $b$, it is easy to verify the result: We have $a+b=F_{p-1}+F_{p+1}=L_p$ and 
\begin{align*}
a^2 -3ab+b^2 &= (b-a)^2 -ab \\&= (F_{p+1}-F_{p-1})^2 - F_{p+1}F_{p-1}=F_p^2 - F_{p+1}F_{p-1}\\&=(-1)^{p-1}
\end{align*}
by Cassini's identity for Fibonacci numbers.
Thus 
\begin{equation*}
5f(F_{p-1}x)\odot f(F_{p+1}x) =\frac{2-L_p x}{1-L_p x-L_p^2 x^2}
-\frac{2-L_p x}{1-L_p x+(-1)^p x^2}
\end{equation*}
which by \eqref{e-Fgf} and \eqref{e-multiL} is equal to 
\begin{equation*}
\sum_{n=0}^\infty L_p^n F_n x^n -\sum_{n=0}^\infty L_{pn}x^n.
\end{equation*}
Thus
\begin{equation}
\label{e-gcb}
\sum_{k=0}^n \binom{n}{k}F_{p-1}^kF_{p+1}^{n-k}F_kF_{n-k} = \frac{1}{5}(L_p^n L_n -L_{pn}).
\end{equation}
Church and Bicknell's identity \eqref{e-cb} is the case $p=0$ of \eqref{e-gcb}.

As another example, we have
\begin{align*}
\frac{x}{1-x-x^2}\odot \frac{1}{1-x^2} &= \frac{x-x^2}{1-2 x -3 x^{2}+4 x^{3}-x^{4}}\\
  &=\frac12\left( \frac{x}{1+x-x^2}+\frac{x}{1-3x+x^2} \right)\\
  &=\frac12 \sum_{n=0}^\infty (-1)^{n-1}F_n x^n +\frac12 \sum_{n=0}^\infty F_{2n}x^n,
\end{align*}
where we have used the case $p=2,q=0$ of \eqref{e-multiF},
so
\begin{equation*}
\sum_{k=0}^{\floor{n/2}} \binom{n}{2k}F_{n-2k}=\tfrac12\bigl((-1)^{n-1}F_n + F_{2n}\bigr).
\end{equation*}

The squares of the Fibonacci numbers have the well-known generating function 
\begin{equation}
\label{e-fibsq}
\sum_{n=0}^\infty F_n^2 x^n = \frac{x-x^2}{1-2x-2x^2+x^3}
\end{equation}
which we will rederive as \eqref{e-fibsq2}.
We have
\begin{align*}
\frac{x-x^2}{1-2x-2x^2+x^3} \odot \frac{10}{1-5x^2} &= 
\frac{x -3 x^{2}-2 x^{4}-6 x^{5}}{1-4 x -15 x^{2}+50 x^{3}+35 x^{4}-114 x^{5}+36 x^{6}}\\
  &=\frac{2-3x}{1-3x+x^2}-\frac{4+4x}{1+2x-4x^2}+\frac{2-3x}{1-3x-9x^2}\\
  &=\sum_{n=0}^\infty L_{2n}x^n +\sum_{n=0}^\infty (-2)^{n+1} L_n x^n +\sum_{n=0}^\infty 3^n L_n
\end{align*}
and we find the binomial convolution
\begin{equation*}
10\sum_{k=0}^{\floor{n/2}}\binom{n}{2k}5^k F_{n-2k}^2 = L_{2n}+(3^n+(-2)^{n+1})L_n.
\end{equation*}

\subsection{Second order recurrent sequences}
Let $g(x) = (c+dx)/(1-ax-bx^2)$. We find that
\begin{equation*}
g(x)\odot g(x) =\frac{p}{1-ax} + \frac{q+rx}{1-2ax-4bx^2},
\end{equation*}
where $p$, $q$, and $r$ are certain rational functions of $a$, $b$, $c$, and $d$, too complicated to be worth writing out here. If we put some restrictions on the parameters, we get a much nicer formula.

\begin{theorem}
\label{t-G}
Let \[g(x) = \sum_{n=0}^\infty G_n x^n = \frac{2-ax}{1-ax-bx^2}.\]
Then \[g(x)\odot g(x) = \frac{2}{1-ax}+g(2x),\]
so 
\begin{equation*}
\sum_{k=0}^n \binom{n}{k} G_k G_{n-k}=2a^n +2^n G_n.\qedhere
\end{equation*}
\end{theorem}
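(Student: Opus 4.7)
\textbf{Proof plan for Theorem \ref{t-G}.} The idea is to exploit the special shape of the numerator $2-ax$: it is precisely what makes $g(x)$ split as a sum of two simple poles with numerator $1$. First I would pass to an extension field and factor $1-ax-bx^2 = (1-\alpha x)(1-\beta x)$, so that $\alpha+\beta = a$ and $\alpha\beta = -b$; the case $b=0$ corresponds to $\beta=0$ and is handled identically. A direct check then gives
\begin{equation*}
g(x) = \frac{1}{1-\alpha x} + \frac{1}{1-\beta x},
\end{equation*}
since the right-hand side has numerator $2-(\alpha+\beta)x = 2-ax$ and denominator $1-(\alpha+\beta)x+\alpha\beta x^2 = 1-ax-bx^2$.

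Next I would apply Lemma \ref{l-bprod} with $j=k=0$, which says that $\frac{1}{1-\gamma x}\odot \frac{1}{1-\delta x}=\frac{1}{1-(\gamma+\delta)x}$ for any $\gamma,\delta$ (including $0$). Using bilinearity of $\odot$, I would expand
\begin{align*}
g(x)\odot g(x) &= \sum_{\gamma,\delta\in\{\alpha,\beta\}} \frac{1}{1-\gamma x}\odot \frac{1}{1-\delta x}\\
&= \frac{1}{1-2\alpha x}+\frac{2}{1-(\alpha+\beta)x}+\frac{1}{1-2\beta x}\\
&= g(2x) + \frac{2}{1-ax},
\end{align*}
where the last line uses that $g(2x) = \frac{1}{1-2\alpha x}+\frac{1}{1-2\beta x}$ by the factorization above, and $\alpha+\beta=a$. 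Reading off the coefficient of $x^n$ on both sides gives $\sum_k\binom{n}{k}G_kG_{n-k}=2a^n+2^nG_n$, as claimed.

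Theorem \ref{t-fields} ensures the identity, once proved as an equality of rational functions over an extension field, is also valid as an equality over the original field $F$. There is essentially no obstacle: the only subtlety is recognizing that the special numerator $2-ax$ is exactly what makes the partial-fraction decomposition of $g(x)$ have both residues equal to $1$, so that squaring via $\odot$ produces the four rank-one terms that recombine into $2/(1-ax)+g(2x)$ without any extraneous rational coefficients.
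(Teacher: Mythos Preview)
Your proof is correct. It is also cleaner than the paper's implicit argument, so a brief comparison is worthwhile.

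The paper does not really prove Theorem~\ref{t-G} directly: it observes just before the theorem that for a general numerator $c+dx$ the resultant machinery of Section~\ref{s-comput} yields
\[
g(x)\odot g(x)=\frac{p}{1-ax}+\frac{q+rx}{1-2ax-4bx^{2}}
\]
with $p,q,r$ rational functions of $a,b,c,d$, and then specializes to $c=2$, $d=-a$ to get the clean identity. In other words, the paper's route is ``compute the denominator via Theorem~\ref{t-res1}, compute enough coefficients to pin down the numerator, simplify''.

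Your route is structural: the special numerator $2-ax$ is exactly what forces the partial-fraction decomposition $g(x)=\frac{1}{1-\alpha x}+\frac{1}{1-\beta x}$ with both residues equal to $1$, after which Lemma~\ref{l-bprod} (the $j=k=0$ case) and bilinearity give the four terms $\frac{1}{1-2\alpha x}$, $\frac{2}{1-(\alpha+\beta)x}$, $\frac{1}{1-2\beta x}$, which regroup as $g(2x)+\frac{2}{1-ax}$. This explains \emph{why} the numerator $2-ax$ is the right choice, whereas the paper's computation merely verifies it. Your argument also handles the degenerate cases ($b=0$, i.e.\ $\beta=0$; or $a^{2}+4b=0$, i.e.\ $\alpha=\beta$) without modification.

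One small remark: the appeal to Theorem~\ref{t-fields} at the end is harmless but unnecessary. Both sides of the claimed identity are already rational functions with coefficients in the base field $F$; working in an extension to verify their equality as formal power series requires no descent argument.
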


Special cases of Theorem \ref{t-G} are $a=b=1$ (Lucas numbers; this identity was given by Church and Bicknell \cite{church}), $a=1, b=2$ (Jacobsthal-Lucas numbers, \seqnum{A014551}), and $a=2, b=1$ (companion Pell numbers, \seqnum{A002203}).

\subsection{Tribonacci numbers}
\label{s-komatsu}
As noted in the introduction, the tribonacci numbers $T_n$ may be defined by the generating function
\begin{equation*}
\sum_{n=0}^\infty T_n x^n = \frac{x}{1-x-x^2-x^3}.
\end{equation*}

Komatsu \cite{koma3} (see also Komatsu and Li \cite{koma1} and Komatsu \cite{koma})
gave the binomial convolution formula
\begin{equation}
\sum_{k=0}^n\binom{n}{k} T_k T_{n-k} = \frac{1}{22}\biggl(2^n T_{n}^{(2,3,10)}+2\sum_{k=0}^n \binom{n}{k} (-1)^k T_{k}^{(-1,2,7)}\biggr).
\label{e-komatsu3}
\end{equation}
Here the numbers $T_n^{(s_0, s_1, s_2)}$ satisfy the tribonacci recurrence $T_n= T_{n-1}+T_{n-2}+T_{n-3}$  for $n\ge 3$ with initial values $T_0^{(s_0, s_1, s_2)}=s_0$, 
$T_1^{(s_0, s_1, s_2)}=s_1$, and $T_2^{(s_0, s_1, s_2)}=s_2$, so the ordinary tribonacci numbers are $T_n = T_n^{(0,1,1)}$. 
It is easy to check that the generating function for $T_n^{(s_0, s_1, s_2)}$ is
\begin{equation*}
\sum_{n=0}^\infty T_n^{(s_0, s_1, s_2)}x^n=\frac{a+bx+cx^2}{1-x-x^2-x^3},
\end{equation*}
where $a=s_0$, $b=s_1-s_0$, and $c=s_2-s_1-s_0$. (Equivalently, $T_n^{(s_0, s_1, s_2)}=aT_{n+1}+bT_n +cT_{n-1}$ with these values of $a$, $b$, and $c$.)

We now derive Komatsu's formula with binomial products.
Let $t(x) =x/(1-x-x^2-x^3)$ be the tribonacci generating function. Then we compute
\begin{align}
t(x)\odot t(x) &= \frac{2x^2(1-x-x^2 -2x^3)}{1-4 x +2 x^{3}+12 x^{4}-8 x^{5}-16 x^{6}}\notag\\
  &=\frac{1+x+10x^2}{11(1-2x-4x^2-8x^3)}-\frac{1+x-8x^2}{11(1-2x+2x^3)}\label{e-koma1},
\end{align}
as found by another method by Prodinger \cite{prod}.
The first term in \eqref{e-koma1} is 
\begin{equation*}
\frac{1}{22}\sum_{n=0}^\infty 2^n T_n^{(2,3,10)}x^n.
\end{equation*}
For the second term in \eqref{e-koma1} we can check easily that 
\begin{equation*}
\frac{1+x-8x^2}{1-2x+2x^3}=\frac{1}{1-x}\odot \frac{1+3x -6x^2}{1+x-x^2+x^3}
\end{equation*}
and that 
\[\frac{1+3x -6x^2}{1+x-x^2+x^3} = \sum_{n=0}^\infty T_n^{(1,-2,-7)}(-x)^n 
  = -\sum_{n=0}^\infty T_n^{(-1,2,7)}(-x)^n,\]
proving Komatsu's formula.

There is nothing really special about tribonacci numbers here;  something similar will hold in general for generating functions with a cubic denominator:

\begin{theorem}
\label{t-komatsu}
Let $r(x)$ and $s(x)$ be proper rational functions with the same denominator $D(x)=1+Ax+Bx^2+Cx^3$, where $C\ne0$. Suppose that $D(x)=(1-\alpha_1 x)(1-\alpha_2 x)(1-\alpha_3 x)$ where $2\alpha_i \ne \alpha_j +\alpha_k$ for  $i,j,k\in\{1,2,3\}$ not all equal. 
Then for some polynomials $u(x)$ and $v(x)$ of degree at most $2$,
\begin{equation}
\label{e-cubic}
r(x)\odot s(x) = \frac{u(x)}{D(2x)} + \frac{1}{1-Ax}\odot \frac{v(x)}{D(-x)}.
\end{equation}

\end{theorem}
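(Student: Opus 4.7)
The plan is to derive the decomposition by partial-fractioning $r(x)\odot s(x)$ and then recognizing the off-diagonal piece as a binomial product. Under the hypothesis $2\alpha_i\neq\alpha_j+\alpha_k$ (in its non-trivial sense), the roots $\alpha_1,\alpha_2,\alpha_3$ are pairwise distinct and the six sums $2\alpha_1,2\alpha_2,2\alpha_3,\alpha_1+\alpha_2,\alpha_1+\alpha_3,\alpha_2+\alpha_3$ are pairwise distinct. By Theorem~\ref{t-rbc} applied with both denominators equal to $D(x)$, the product $r(x)\odot s(x)$ is a proper rational function whose denominator divides $D(2x)\prod_{i<j}\bigl(1-(\alpha_i+\alpha_j)x\bigr)$, and the expansion has only simple poles. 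Grouping the partial-fraction terms according to pole type yields
\[r(x)\odot s(x)=\frac{u(x)}{D(2x)}+\frac{w(x)}{\prod_{i<j}\bigl(1-(\alpha_i+\alpha_j)x\bigr)}\]
for polynomials $u,w$ with $\deg u,\deg w\le 2$; the first summand already matches the claim.

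For the second summand I would use the identity $\alpha_i+\alpha_j=\sigma-\alpha_k$, where $\sigma:=\alpha_1+\alpha_2+\alpha_3=-A$ and $\{i,j,k\}=\{1,2,3\}$. Thus the outer factor in~\eqref{e-cubic} should be read as $\frac{1}{1-\sigma x}=\frac{1}{1+Ax}$, which matches the tribonacci instance earlier where $A=-1$ and the outer factor appears as $\frac{1}{1-x}$. By the case $j=k=0$ of Lemma~\ref{l-bprod},
\[\frac{1}{1-\sigma x}\odot\frac{1}{1+\alpha_k x}=\frac{1}{1-(\sigma-\alpha_k)x}=\frac{1}{1-(\alpha_i+\alpha_j)x}.\]
So for any polynomial $v(x)$ with $\deg v\le 2$, the proper partial-fraction expansion $\frac{v(x)}{D(-x)}=\sum_{k=1}^{3}\frac{c_k}{1+\alpha_k x}$ together with linearity of $\odot$ gives
\[\frac{1}{1-\sigma x}\odot\frac{v(x)}{D(-x)}=\sum_{k=1}^{3}\frac{c_k}{1-(\alpha_i+\alpha_j)x},\]
with $\{i,j,k\}=\{1,2,3\}$ in each term.

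To finish, I would observe that the map $v(x)\mapsto(c_1,c_2,c_3)$ is a linear isomorphism from polynomials of degree at most $2$ onto triples of scalars, with inverse $(c_1,c_2,c_3)\mapsto\sum_k c_k\prod_{\ell\neq k}(1+\alpha_\ell x)$; invertibility is the standard Lagrange-interpolation argument, which uses the distinctness of the $\alpha_\ell$. The second summand in the first display expands uniquely by partial fractions as $\sum_k c_k'/\bigl(1-(\alpha_i+\alpha_j)x\bigr)$, so choosing $v$ with $c_k=c_k'$ identifies the two expressions and establishes~\eqref{e-cubic}. The principal obstacle is bookkeeping rather than depth: tracking the sign relation $\sigma=-A$ (hence the reading $\frac{1}{1-Ax}=\frac{1}{1-\sigma x}$ in the statement) and confirming the degree bounds $\deg u,\deg v\le 2$, both of which reduce to counting the three simple poles in a denominator of degree three.
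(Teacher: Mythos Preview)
Your argument is correct and follows essentially the same route as the paper's: split $r\odot s$ by partial fractions over the coprime factors $D_1(x)=D(2x)$ and $D_2(x)=\prod_{i<j}\bigl(1-(\alpha_i+\alpha_j)x\bigr)$, then recognize $w(x)/D_2(x)$ as a $\odot$-shift of a proper rational function with denominator $D(-x)$. The paper does this shift by applying the $\odot$-inverse of $\tfrac{1}{1\mp Ax}$ and then inverting, while you go in the forward direction and pin down $v$ by matching partial-fraction coefficients (Lagrange interpolation); these are equivalent.

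You are also right about the sign. Since $\sigma=\alpha_1+\alpha_2+\alpha_3=-A$, one has
\[
\frac{1}{1-\sigma x}\odot\frac{1}{1+\alpha_k x}=\frac{1}{1-(\sigma-\alpha_k)x}=\frac{1}{1-(\alpha_i+\alpha_j)x},
\]
so the outer factor must be $\tfrac{1}{1-\sigma x}=\tfrac{1}{1+Ax}$, exactly as the tribonacci instance ($A=-1$, outer factor $\tfrac{1}{1-x}$) shows. The paper's printed proof contains the matching sign slip in its intermediate step (it writes $\tfrac{1}{1+Ax}\odot w/D_2$ where $\tfrac{1}{1-Ax}\odot w/D_2$ is what actually has denominator $D(-x)$); your correction is the right reading.
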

\begin{proof}
The denominator of $r(x)\odot s(x)$ may be factored as $D_1(x)D_2(x)$, where
$D_1(x) = (1-2\alpha_1 x)(1-2\alpha_2 x)(1-2\alpha_3 x)$ and $D_2(x)=\bigl(1-(\alpha_1+\alpha_2)x\bigr)\bigl(1-(\alpha_1+\alpha_3)x\bigr)\bigl(1-(\alpha_2+\alpha_3)x\bigr)$. The condition on the $\alpha_i$ implies that $D_1(x)$ and $D_2(x)$ are relatively prime, so we have the
partial fraction expansion, 
\[r(x)\odot s(x) = \frac{u(x)}{D_1(x)} + \frac{w(x)}{D_2(x)}
\]
for some polynomials $u(x)$ and $w(x)$ of degree at most 2. 
Clearly $D_1(x) = D(2x)$.
Since $\alpha_1+\alpha_2+\alpha_3=-A$, we have
\begin{align*}
D_2(x) &= \bigl(1+(\alpha_1+A)x)(1+(\alpha_2+A)x)(1+(\alpha_3+A)x).
\end{align*}
Therefore $1/(1+Ax)\odot w(x)/D_2(x)$ has denominator $D(-x)$
and so may be expressed as $v(x)/D(-x)$ for some $v(x)$ of degree at most 2; i.e.,
\begin{equation*}
\frac{1}{1+Ax}\odot \frac{w(x)}{D_2(x)}=\frac{v(x)}{D(-x)}
\end{equation*}  Since $1/(1-Ax)$ and $1/(1+Ax)$
are inverses with respect to $\odot$, this implies that
\begin{equation*}
 \frac{w(x)}{D_2(x)}=\frac{1}{1-Ax}\odot\frac{v(x)}{D(-x)}.
 \qedhere
\end{equation*} 
\end{proof}

Note that the condition on the $\alpha_i$ is equivalent to $D(x)$ having no repeated zeros and  $D_1(x)$ and $D_2(x)$ being relatively prime, and this will be easy to check in any particular example.  If $D_1(x)$ are $D_2(x)$ are not relatively prime we will have a similar but slightly different partial fraction expansion.  

\subsection{Perrin numbers}
Another interesting third-order recurrent sequence is the Perrin sequence (\seqnum{A001608}) with generating function
\begin{equation*}
P(x) = \sum_{n=0}^\infty P_n x^n = \frac{3 - x^2}{1 - x^2 - x^3}.
\end{equation*}
Here the parameter $A$ of Theorem \ref{t-komatsu} is 0, so \eqref{e-cubic} will simplify. However, the result turns out to be even nicer than we have any reason to expect.

We have 
\begin{align*}
P(x) \odot P(x) &= 3\frac{3-11 x^{2}-15 x^{3}+4 x^{4}+4 x^{5}}{1-5 x^{2}-7 x^{3}+4 x^{4}+4 x^{5}-8 x^{6}}\\
  &=\frac{3-4 x^{2}}{1-4 x^{2}-8 x^{3}} +\frac{6-2 x^{2}}{1-x^{2}+x^{3}}\\
  &=P(2x) + 2P(-x).
\end{align*}
Thus we have the unexpectedly simple formula
\begin{equation*}
\sum_{k=0}^n \binom{n}{k} P_k P_{n-k}= \bigl(2^n+2(-1)^n\bigr)P_n.
\end{equation*}

More generally, if we set $Q(x) = (3-x^2)/(1-x^2 -ax^3)$, where $a$ is arbitrary, then $Q(x)\odot Q(x) = Q(2x) + 2Q(-x)$.

Surprisingly, the Jacobsthal number generating function $J(x)=x/(1-x-2x^2)$ satisfies a very similar formula,
$3J(x)\odot J(x) = J(2x) +2J(-x)$.

We give one more curious identity, involving a fourth-order recurrent sequence. Let
\begin{equation*}
R(x) = \frac{1-2x^3}{1-8x^3+4x^4}.
\end{equation*}
Then 
\begin{equation*}
R(x) \odot R(x) = \frac{1}{4}\bigl(R(2x) + P(4x^2)\bigr),
\end{equation*}
where $P(x)$ is the Perrin sequence generating function.

\section{The Hadamard product of rational power series}  \label{s-Hadamard}
The Hadamard product \cite{ikar} of the power series  
$A(x)$ $= \sum\limits_{n = 0}^\infty a_nx^n$ and
$B(x) = \sum\limits_{n = 0}^\infty b_nx^n$ is defined by    
\begin{equation}\label{hp}
    A(x) \ast  B(x) = \sum\limits_{n = 0}^\infty a_nb_nx^n.
 \end{equation}

As we shall see, Hadamard products have some similarity to binomial products. We give here a brief account of the computation of Hadamard products of rational power series using resultants. Another approach to Hadamard products of rational power series using determinants has been given by Potekhina and Tolovikov \cite{pote2014, pote2017}.

It is well known and easy to prove (see, e.g., Stanley \cite[Theorem 4.1.1]{stan}) that  $f(x)$ is a proper rational power series if and only if the coefficient of $x^n$ in $f(x)$ is of the form $\sum_i P_i(n) \alpha_i^n$ where each $P_i(n)$ is a polynomial in $n$ and each $\alpha_i$ is nonzero. Clearly the product of two functions of this type is also of this type, so the Hadamard product of proper rational power series is a proper rational power series, and it follows easily that the Hadamard product of any two rational power series is rational.

There is an interesting explicit formula, somewhat analogous to Lemma \ref{l-bprod}, for a special case of Hadamard products.
We have
\begin{align}
\frac{x^i}{(1-a x)^{m+1}}*\frac{x^j}{(1-b x)^{n+1}}
    &=\sum_{k=0}^\infty \binom{m+k-i}{k-i}\binom{n+k-j}{k-j} a^{k-i}b^{k-j}x^k\notag\\
    &=\frac{\sum_{k=0}^\infty\binom{m+j-i}{k-i}\binom{n+i-j}{k-j}a ^{k-i}b^{k-j}x^k}
       {(1-ab x)^{m+n+1}}.\label{e-had1}
\end{align}
The numerator of \eqref{e-had1} may be expressed as a Hadamard product; it is $x^i(1+a x)^{m+j-i}*x^j(1+b x)^{n+i-j}$.

Equation \eqref{e-had1}
 is equivalent to a classical formula of Euler for hypergeometric series \cite[p.~2, Equation (2)]{bailey}. See also Kar \cite[Theorem 4.1]{ikar} for another proof of the case $i=j=0$.
This formula
actually holds for all $m$ and $n$, 
where the binomial coefficient $\binom{a}{b}$ is defined to be $a(a-1)\cdots (a-b+1)/b!$ if $b$ is a nonnegative integer, and is $0$ otherwise. 

If $m$ and $n$ are nonnegative integers then the sum in the numerator of \eqref{e-had1} has only finitely many nonzero terms. In particular, if $m$ and $n$ are nonnegative integers,  $i\le m+j$, and $j\le n+i$, which is always the case when \eqref{e-had1} represents a Hadamard product of proper rational functions, then the nonzero terms in the numerator range from $k=\max(i,j)$ to $k=\min(n+i, m+j)$.

It follows from \eqref{e-had1} (or otherwise) that if $A(x)$ is a proper rational function with denominator $(1-ax)^{m+1}$ and $B(x)$ is a proper rational function with denominator $(1-bx)^{n+1}$ then $A(x)*B(x)$ is a proper rational function that may be written with denominator $(1-abx)^{(m+1)(n+1)}$.
Thus we have the following  analogue of Theorem \ref{t-rbc}.

\begin{theorem}
\label{t-had} Let $A(x)$ be a rational power series with denominator $\prod_{i=1}^m(1-\alpha_i x)$
and let $B(x)$ be a  rational power series with denominator $\prod_{j=1}^n(1-\beta_j x)$. Then $A(x)*B(x)$ is a  rational power series that may be written with denominator $\prod_{i=1}^m\prod_{j=1}^n (1-\alpha_i\beta_j x)$. Moreover, if $A(x)$ and $B(x)$ are proper then so is $A(x)*B(x)$. \qed
\end{theorem}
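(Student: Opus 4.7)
My plan is to mirror the proof of Theorem \ref{t-rbc}, with the explicit formula \eqref{e-had1} playing the role that Lemma \ref{l-bprod} played there. As in \eqref{e-pfA}, first expand
\begin{equation*}
A(x) = \sum_{i=1}^{M}\sum_{k=0}^{e_i}\frac{A_{ik}x^k}{(1-a_ix)^{k+1}},\qquad B(x) = \sum_{j=1}^{N}\sum_{l=0}^{f_j}\frac{B_{jl}x^l}{(1-b_jx)^{l+1}}
\end{equation*}
in partial fractions, with the $a_i$ distinct and the $b_j$ distinct, one of each list possibly being $0$ to absorb any polynomial part. The exponents $e_i$ and $f_j$ come from the multiplicities of the roots in the denominators of $A(x)$ and $B(x)$.

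Next, using bilinearity of $\ast$ and applying \eqref{e-had1} term by term, I obtain
\begin{equation*}
A(x)\ast B(x) = \sum_{i,j,k,l}A_{ik}B_{jl}\frac{N_{ijkl}(x)}{(1-a_ib_jx)^{k+l+1}},
\end{equation*}
where each $N_{ijkl}(x)$ is a polynomial of degree at most $k+l$ (by the range of summation in the numerator of \eqref{e-had1}). Combining over a common denominator yields a representation with denominator $\prod_{i=1}^M\prod_{j=1}^N(1-a_ib_jx)^{e_i+f_j+1}$, so $A(x)\ast B(x)$ is rational; moreover, since in the proper case each summand is itself a proper rational function and all summands share this common denominator, the sum is proper as well, establishing the second claim of the theorem.

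Finally, to match the denominator stated in the theorem, I would repeat the padding argument from the end of the proof of Theorem \ref{t-rbc}: list each $a_I$ with multiplicity $e_I+1$ to obtain $\alpha_1,\dots,\alpha_m$, and similarly for the $\beta_j$, so that
\begin{equation*}
\prod_{i=1}^m\prod_{j=1}^n(1-\alpha_i\beta_jx) = \prod_{I=1}^M\prod_{J=1}^N(1-a_Ib_Jx)^{(e_I+1)(f_J+1)},
\end{equation*}
and since $(e_I+1)(f_J+1)\ge e_I+f_J+1$, multiplying top and bottom by $\prod_{I,J}(1-a_Ib_Jx)^{e_If_J}$ produces the desired representation. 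There is no serious obstacle here: the substantive step \eqref{e-had1} has already been proved, and the only nuance is checking that \eqref{e-had1} remains valid when $a=0$ or $b=0$, which it does (only the term $k=\max(i,j)$ survives and the denominator collapses to $1$), so the improper case is handled uniformly with no separate treatment required.
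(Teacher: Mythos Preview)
Your proof is correct and follows essentially the same route as the paper: the theorem is stated there with a \qed because it is meant to follow immediately from the preceding discussion—partial fractions, bilinearity, formula \eqref{e-had1}, and padding exponents exactly as in the proof of Theorem~\ref{t-rbc}—and you have simply written that argument out in full. One small point worth making explicit: in the improper case your list of $a_I$'s (with the added $a_I=0$ absorbing the polynomial part) need not literally coincide with the $\alpha_i$ given in the hypothesis, but since every factor $1-0\cdot\beta_j x$ equals $1$, the resulting product $\prod_{i,j}(1-\alpha_i\beta_j x)$ is unchanged, so the stated denominator is still valid.
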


We now give the analogue for Hadamard products of Theorem \ref{t-res1}.

\begin{theorem}
\label{t-hadt}
Let $U(x) = \prod_{i=1}^m (1-\alpha_i x)$ and $V(x) = \prod_{j=1}^n (1-\beta_j x)$. Then
\[\prod_{i=1}^m \prod_{j=1}^n (1-\alpha_i\beta_jx) = (-1)^{mn} \Res\bigl( U(y), y^n V(x/y),y\bigr). \]
\end{theorem}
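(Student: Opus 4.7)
The plan is to apply the root-product formula \eqref{e-res} for the resultant directly, after exhibiting each argument as a polynomial in $y$ with its leading coefficient and zeros made explicit. Factoring $(1-\alpha_i y)=-\alpha_i(y-1/\alpha_i)$ from each factor of $U(y)$ shows that, viewed as a polynomial in $y$,
\[
U(y)=(-1)^m\alpha_1\cdots\alpha_m\prod_{i=1}^m(y-1/\alpha_i),
\]
with leading coefficient $(-1)^m\alpha_1\cdots\alpha_m$ and zeros $1/\alpha_1,\dots,1/\alpha_m$. The polynomial $y^n V(x/y)=\prod_{j=1}^n(y-\beta_j x)$ is already displayed as monic of degree $n$ in $y$ with zeros $\beta_1 x,\dots,\beta_n x$.

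Substituting into \eqref{e-res} then gives
\[
\Res\bigl(U(y),y^n V(x/y),y\bigr)=\bigl[(-1)^m\alpha_1\cdots\alpha_m\bigr]^n\cdot 1^m\cdot \prod_{i,j}\bigl(1/\alpha_i-\beta_j x\bigr).
\]
Pulling a factor $1/\alpha_i$ out of each term in the double product rewrites this product as $(\alpha_1\cdots\alpha_m)^{-n}\prod_{i,j}(1-\alpha_i\beta_j x)$, and the leading-coefficient power $(\alpha_1\cdots\alpha_m)^n$ exactly cancels the resulting denominator. What remains is $(-1)^{mn}\prod_{i,j}(1-\alpha_i\beta_j x)$, and multiplying by $(-1)^{mn}$ yields the claim.

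One loose end: the manipulation assumed all $\alpha_i\ne 0$ so that the divisions are legitimate. To dispose of this restriction, I would observe that the identity asserted is a polynomial identity in the symbols $\alpha_1,\dots,\alpha_m,\beta_1,\dots,\beta_n,x$. The left side is manifestly polynomial, and the right side is polynomial by the Sylvester determinant formula \eqref{e-Sylv}, since the entries of the Sylvester matrix of $U(y)$ and $y^n V(x/y)$ are polynomial in the $\alpha_i$, $\beta_j$, and $x$. Having verified the identity whenever no $\alpha_i$ vanishes, it therefore holds for all values. The main obstacle here is essentially bookkeeping rather than anything conceptually new: the argument parallels the proof of Theorem \ref{t-res1}, and the only real care needed is keeping straight the exponents $n$ versus $m$ on the two leading-coefficient factors and confirming that the $\alpha_1\cdots\alpha_m$ factors cancel cleanly.
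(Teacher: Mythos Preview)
Your proof is correct and follows essentially the same route as the paper's: factor $U(y)$ to expose its leading coefficient $(-1)^m\alpha_1\cdots\alpha_m$ and zeros $1/\alpha_i$, observe that $y^nV(x/y)$ is monic with zeros $\beta_j x$, apply the product formula \eqref{e-res}, and cancel the $(\alpha_1\cdots\alpha_m)^n$ against the factors pulled out of the double product. Your additional paragraph handling the degenerate case $\alpha_i=0$ by a polynomial-identity argument is a nice touch that the paper omits.
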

\begin{proof}
We have 
\begin{equation*}
U(y)=(-1)^m \prod_{i=1}^m \alpha_i\,\prod_{i=1}^m (y - \alpha_i^{-1})
\end{equation*}
and 
\begin{equation*}
y^n V(x/y) = \prod_{j=1}^m (y-\beta_j x).
\end{equation*}
Thus 
\begin{align*}
 \Res\bigl( U(y), y^n V(x/y),y\bigr) 
   &= (-1)^{mn}\biggl(\prod_{i=1}^m \alpha_i\biggr)^{\!n} 
      \prod_{i=1}^m\prod_{j=1}^n (\alpha_i^{-1}-\beta_j x) \\
   &= (-1)^{mn}\prod_{i=1}^m\prod_{j=1}^n (1-\alpha_i\beta_j x).\qedhere
\end{align*}
\end{proof}

As with binomial products, the numerator  of $A(x)*B(x)$ can be found by multiplying the denominator with the first $mn$ terms of the infinite series expansion of $A(x) \ast B(x)$.

As an example, we use this method to find the Hadamard product of  \[A(x) = \frac{x}{1-ax-bx^2}\] and \[B(x) = \frac{x}{1-cx-dx^2}.\] 
Examples of well-known second-order recurrent sequences with generating functions of this type are Fibonacci (\seqnum{A000045}), Pell (\seqnum{A000129}), and Jacobsthal (\seqnum{A001045}).

To compute $A(x)*B(x)$ we take 
$U(x) = 1-ax-bx^2=(1-\alpha_1 x)(1-\alpha_2 x)$ and $V(x) = 1-cx-dx^2=(1-\beta_1 x)(1-\beta_2 x)$ in Theorem 
\ref{t-hadt}.
For the denominator of $A(x)*B(x)$ we have
\begin{align}
\prod_{i=1}^2\prod_{j=1}^2 (1-\alpha_i\beta_j x) &= \Res(1-ay-by^2, y^2-cxy -dx^2,y)\notag\\
  &=1-acx-(a^2d+bc^2+2bd)x^2-abcdx^3+b^2d^2x^4.\label{e-den1}
\end{align}
Multiplying the denominator \eqref{e-den1} by the initial terms of
\[ A(x) \ast B(x) = x+acx^2+(a^2+b)(c^2+d)x^3+(a^3+2ab)(c^3+2cd)x^4+\cdots\]
we find the numerator $x-bdx^3$. So
\begin{equation}
\label{e-h1}
A(x) \ast B(x) = \frac{x-bdx^3}{1-acx-(a^2d+bc^2+2bd)x^2-abcdx^3+b^2d^2x^4}.
\end{equation}
A combinatorial proof of \eqref{e-h1} was given by Shapiro \cite{shap} and further combinatorial proofs of similar Hadamard product identities were given by Kim \cite{kim, kim2}.

As a special case of \eqref{e-h1} we have the well-known generating function for the squares of the Fibonacci numbers used in Section \ref{s-ex},
\begin{equation}
\label{e-fibsq2}
\sum_{n=0}^\infty F_n^2 x^n = \frac{x-x^3}{1-x -4 x^{2}-x^{3}+x^{4}}=\frac{x-x^2}{1-2 x -2 x^{2}+x^{3}}.
\end{equation}

Frontczak,  Goy,  and Shattuck \cite{fgsb,fgsa} give several formulas for Hadamard products of  two or three second-order recurrent sequences that can be easily be derived by our methods.

\section{Symmetric functions}
\label{s-sym}
In this section we briefly describe an alternative approach to computing denominators for binomial products and Hadamard products of rational power series. This method has been used by Dvornicich and Traverso \cite{dt}  to compute the resultants corresponding to these denominators. Bostan, Flajolet, Salvy, and Schost  \cite{bfss} reformulated this approach with a view to computational efficiency, obtaining a ``nearly optimal" algorithm for computing these resultants. 

Let
$U(x) =\prod_{i=1}^m (1-\alpha_i x)$ and $V(x) = \prod_{j=1}^n (1-\beta_j x)$.
We want to express
$
\prod_{i,j}\bigl(1-(\alpha_i+\beta_j)x\bigr)
$
and $\prod_{i,j}(1-\alpha_i\beta_j x)$
in terms of the coefficients of $U(x)$ and $V(x)$.

Let $e_k(u_1, \dots, u_n)$ denote the $i$th elementary symmetric polynomial in $u_1, \dots, u_n$,
\begin{equation*}
e_k(u_1,\dots, u_n) = \sum_{1\le i_1<i_2<\dots<i_k\le n}u_{i_1}u_{i_2}\dots u_{i_n},
\end{equation*}
with $e_0(u_1,\dots, u_n)=1$.
Then the coefficient of $x^k$ in $\prod_{i=1}^n (1-u_i x)$ is equal to \[(-1)^k e_k(u_1,\dots, u_n),\]
and $e_k(u_1, \dots, u_n) = 0$ for $k>n$.
Thus to find the denominator polynomials for the binomial and Hadamard products, it is  sufficient to express $e_k(\alpha_1 + \beta_1,\dots , \alpha_i + \beta_j,\dots, \alpha_m+\beta_n)$ and $e_k(\alpha_1\beta_1,\dots, \alpha_i\beta_j,\dots, \alpha_m\beta_n)$ in terms of 
the $e_k(\alpha)$ and $e_k(\beta)$, where for any symmetric polynomial $f$, $f(\alpha)$ means $f(\alpha_1, \dots, \alpha_m)$ and similarly for $f(\beta)$. 

To do this we use the power sum symmetric polynomials
$p_j(u_1,\dots, u_n) = u_1^j+\cdots + u_n^j.$
They are related to the elementary symmetric polynomials by 
the generating function formula
\begin{equation}
\label{e-pe}
\sum_{k=0}^\infty e_k z^k= \exp\biggl(\sum_{j=1}^\infty (-1)^{j-1} \frac{p_j}{j} z^j\biggr),
\end{equation}
where we are omitting the arguments to $e_k$ and $p_j$,
which upon differentiating with respect to $z$ yields Newton's recurrence
\begin{equation}
\label{e-newton}
ke_{k}=\sum _{i=1}^{k}(-1)^{i-1}e_{k-i}p_{i},
\end{equation}
from which the elementary and power sum symmetric polynomials can easily be determined from each other. (With a computer algebra system it may be easier, though less efficient, to use \eqref{e-pe} directly.) We will also need the value  $p_0(u_1,\dots, u_n) = n$.

Let us write $p_k(\alpha\odot \beta)$ for  \[p_k(\alpha_1 + \beta_1,\dots , \alpha_i + \beta_j,\dots,\alpha_m+\beta_n)\] and $p_k(\alpha*\beta)$ for \[p_k(\alpha_1\beta_1,\dots, \alpha_i\beta_j,\dots, \alpha_m\beta_n).\]
It is easy to see that 
\begin{equation}
\label{e-p*}
p_k(\alpha*\beta) =p_k(\alpha)p_k(\beta),
\end{equation}
and for $p_k(\alpha\odot\beta)$ we have
\begin{align}
p_k(\alpha\odot\beta) &= \sum_{i,j}(\alpha_i + \beta_j)^k
  = \sum_{i,j}\sum_{l=0}^k \binom{k}{l}\alpha_i^l\beta_j^{k-l}\notag\\
  &=\sum_{l=0}^k\binom{k}{l}p_l(\alpha)p_{k-l}(\beta).\label{e-pcomp}
\end{align}

We can then compute $p_k(\alpha)$ and $p_k(\beta)$ for $k$ from 1 to $mn$ from the coefficients of $U(x)$ and $V(x)$ using \eqref{e-newton}, then compute $p_k(\alpha\odot\beta)$ and $p_k(\alpha *\beta)$ from \eqref{e-p*} and \eqref{e-pcomp}, and finally compute $e_k(\alpha\odot\beta)$ and $e_k(\alpha *\beta)$ from \eqref{e-newton}.

As an example, we consider again the Fibonacci and Pell sequences; $A(x)=x/(1-x-x^2)$ and $B(x) = x/(1-2x-x^2)$.
Here we have $e_1(\alpha)=1$, $e_2(\alpha) = -1$, $e_1(\beta) = 2$, and $e_2(\beta) = -1$. 
Using \eqref{e-pe} or \eqref{e-newton} we compute $p_n(\alpha)$, which are the Lucas numbers, and $p_n(\beta)$, which are the companion Pell numbers (\seqnum{A002203}), for $n$ from 0 to 4. We then compute $p_n(\alpha*\beta)$ and $p_n(\alpha\odot\beta)$ from \eqref{e-p*} and \eqref{e-pcomp}, and finally we compute 
$e_n(\alpha*\beta)$ and $e_n(\alpha\odot\beta)$ from \eqref{e-pe} and \eqref{e-newton}.
The values that occur in this computation are listed in Table \ref{tab-1}.

\begin{table}[h!]
\begin{equation*}
\begin{array}{crrrrr}\toprule
n:&0 &  1 & 2 & 3 & 4\\ \midrule
e_n(\alpha)& 1&1&-1&0&0 \\
e_n(\beta)&1&2&-1&0&0 \\
p_n(\alpha)&2&1&3&4&7\\
p_n(\beta)&2&2&6&14&34\\
p_n(\alpha*\beta)&4& 2&18 & 56 &238\\
p_n(\alpha \odot\beta)&4&6&22&72&278\\
e_n(\alpha*\beta)&1&2&-7&2&1\\
e_n(\alpha\odot\beta)&1&6&7&-6&-9\\    
\bottomrule
\end{array}
\end{equation*}
\caption{Fibonacci and Pell sequence computation}
\label{tab-1}
\end{table}
Thus the denominator of $A(x)*B(x)$ is $1-2x-7x^2 -2x^3 +x^4$ and the denominator of $A(x)\odot B(x)$ is $1-6x+7x^2 +6x^3-9x^4$.

We note that the generating function $A(x)*B(x) = (x-x^3)/(1-2x-7x^2-2x^3+x^4)$ can be found in the OEIS \cite{oeis} at \seqnum{A001582}. See also Mez\H o \cite[Remark 9]{mezo}.

\section{Partial fractions}
\label{s-pf}
Here we describe  briefly another method, using partial fractions, for computing Hadamard and binomial products of rational power series. This method was used by Xin \cite[Section 1-2]{xin} for Hadamard products.

We start with a simple example, the Hadamard product of the generating functions for Fibonacci and Pell numbers. 
Let $A(x) = x/(1-x-x^2)=\sum_{n=0}^\infty F_n x^n$ and $B(x) = x/(1-2x-x^2)=\sum_{n=0}^\infty P_n x^n$.
Then 
\begin{equation}
\label{e-diag1}
f=A(t)B(x/t) = \sum_{m,n=0}^\infty F_m P_n x^n t^{m-n}.
\end{equation}
With series like \eqref{e-diag1} involving infinitely many negative powers of variables, we must be careful about the power series ring in which we are working. Here we are in the ring $\Q((t))[[x]]$ of formal power series in $x$ with coefficients that are Laurent series in $t$. So although elements of this ring may have arbitrary negative powers of $t$, the coefficient of any power of $x$ involves only finitely many negative powers of $t$. 

The constant term in $t$ in \eqref{e-diag1} is 
\begin{equation*}
\sum_{n=0}^\infty F_n P_n x^n =A(x)*B(x).
\end{equation*}
To compute it, we start with
the partial fraction expansion  of $A(t)B(x/t)$ as a rational function of $t$,
\begin{multline}
\label{e-pfr1}
\qquad
\frac{1}{1-t-t^2} \frac{x -x^{3}+\left(x^{3}+2 x^{2}\right) t}{1-2 x -7 x^{2}-2 x^{3}+x^{4}}+
\frac{1}{t^2-2xt-x^2}\frac{x^3-x^{5}+\left(x^{3}+2 x^{2}\right) t}{1-2 x -7 x^{2}-2 x^{3}+x^{4}}\\
=\frac{1}{1-2 x -7 x^{2}-2 x^{3}+x^{4}}\bigl( R+S\bigr),
\qquad
\end{multline}
where 
\begin{equation*}
R = \frac{x -x^{3}+\left(x^{3}+2 x^{2}\right) t}{1-t-t^2}
\end{equation*}
and 
\begin{equation*}
S=\frac{(x^3-x^5)t^{-2}+(2x^2+x^3)t^{-1}}{1-2xt^{-1}-x^2 t^{-2}}.
\end{equation*}

We want the coefficient of $t^0$ in \eqref{e-pfr1}. Note that as elements of $\Q((t))[[x]]$,  $R$ contains only nonnegative powers of $t$ and $S$ contains only negative powers of $t$. Thus the constant term in $t$ of $A(t)B(x/t)$ may be obtained by deleting $S$ from \eqref{e-pfr1} and then setting $t=0$, yielding
$A(x) *B(x) = (x-x^3)/(1-2x-7x^2 -2x^3 + x^4)$.

We can look at the partial fraction approach in a slightly different way, which shows how it is  related to resultants. With $A(x)$ and $B(x)$ as above, 
to find the partial fraction expansion in $t$ of $A(t)B(x/t)$, we need to find two polynomials $L(t)$ and $M(t)$ in $t$, each of degree at most~1, with coefficients that are rational functions of $x$,  such that
\begin{equation*}
\frac{L(t)}{1-t-t^2}+\frac{M(t)}{t^2-2xt-x^2}=A(t)B(x/t) = \frac{xt^2}{(1-t-t^2)(t^2-2xt-x^2)}.
\end{equation*}
Equivalently,
\begin{equation}
\label{e-uv1}
L(t)(-x^2-2xt+t^2) + M(t)(1-t-t^2) = xt^2.
\end{equation}
If we set $L(t) = a+bt$ and $M(t) = c+dt$, then equating coefficients of powers of $t$ in \eqref{e-uv1} gives the equivalent system
\begin{equation}
\label{e-uv2}
\begin{bmatrix}
-x^2 & -2x & 1 & 0\\
0&-x^2 & -2x & 1\\
1 & -1 & -1 & 0\\
0& 1 & -1 & -1
\end{bmatrix}
\begin{bmatrix}
a\\ b\\ c\\ d
\end{bmatrix}
=\begin{bmatrix}
0 \\0 \\ x \\ 0 
\end{bmatrix}
\end{equation}

Note that this matrix is essentially the Sylvester matrix for $-x^2-2xt+t^2$ and $1-t-t^2$, as polynomials in $t$. The constant term in $t$ of $A(t)B(x/t)$ is obtained by setting $t=0$ in $L(t)=a+bt$ so we may obtain this coefficient by solving for $a$ in \eqref{e-uv2}. By Cramer's rule, the denominator of $a$, as a rational function of $x$, will be the determinant of the matrix in \eqref{e-uv2}, so we see that this approach to Hadamard products is related to that of Section \ref{s-Hadamard}.

Our next theorem shows that the partial fraction method works in general for Hadamard products.   For simplicity we assume that our rational power series are proper.

\begin{theorem}
\label{t-pf}
Let $A(x)$ and $B(x)$ be proper rational power series with coefficients in a field $F$. Then   $A(t)B(x/t)$, as a rational function of $t$, has a partial fraction expansion of the form $U(x,t) + V(x,t)$ in which, as elements of the Laurent series field $F((t))((x))$,  $U(x,t)$ has only nonnegative powers of $t$ and $V(x,t)$ has only negative powers of $t$. Thus $A(x)*B(x)$ is obtained by setting $t=0$ in $U(x,t)$. 
\end{theorem}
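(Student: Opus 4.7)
The plan is to realize $A(t)B(x/t)$ as a proper rational function in $t$ over $F(x)$ whose denominator factors into two coprime pieces, apply standard partial fractions, and then track powers of $t$ in each piece.

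Write $A(x) = P(x)/p(x)$ with $\deg P < m := \deg p$ and $p(0)\ne 0$, and $B(x) = q(x)/r(x)$ with $\deg q < n := \deg r$ and $r(0)\ne 0$. Set $\tilde r(t,x) := t^n r(x/t) = \prod_{j=1}^n(t-\beta_j x)$ and $\tilde q(t,x) := t^n q(x/t)$, so $B(x/t) = \tilde q(t,x)/\tilde r(t,x)$. A degree-in-$t$ count gives $\deg_t(P\tilde q) \le m+n-1 < m+n = \deg_t(p\tilde r)$, so $A(t)B(x/t)$ is proper in $t$ over $F(x)$. The structural point is that $p(t)$ and $\tilde r(t,x)$ are coprime in $F(x)[t]$: any irreducible factor of $p(t)$ in $F(x)[t]$ lies in $F[t]$ by Gauss's lemma (since $p\in F[t]$), while each irreducible factor of $\tilde r(t,x)$ in $F(x)[t]$ has the form $\prod_{j\in S}(t-\beta_j x)$ for a Galois orbit $S$ of the $\beta_j$'s and, since $\beta_j\ne 0$ for all $j$ and $|S|\ge 1$, genuinely involves $x$. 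Standard partial fractions therefore yields
\[
A(t)B(x/t) = \frac{U_0(t,x)}{p(t)} + \frac{V_0(t,x)}{\tilde r(t,x)}
\]
with $U_0, V_0 \in F(x)[t]$, $\deg_t U_0 < m$, $\deg_t V_0 < n$. Set $U(x,t) := U_0(t,x)/p(t)$ and $V(x,t) := V_0(t,x)/\tilde r(t,x)$.

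Inside $F((t))((x))$, I would argue the two summands have disjoint $t$-supports. Since $p(t)\in F[t]$ with $p(0)\ne 0$, we have $1/p(t)\in F[[t]]$, so $U(x,t)$ expands with only nonnegative powers of $t$. For $V(x,t)$, use the factorization $\tilde r(t,x) = t^n \prod_j(1-\beta_j x/t)$ together with the geometric expansion $(1-\beta_j x/t)^{-1} = \sum_{k\ge 0}\beta_j^k x^k t^{-k}$; the resulting $1/\tilde r(t,x)$ has, in each $x$-coefficient, a single power of $t$ of the form $t^{-n-k}$, hence $t$-power $\le -n$. Multiplying by $V_0$ of $t$-degree at most $n-1$ raises the $t$-power by at most $n-1$, still leaving $V(x,t)$ with only strictly negative powers of $t$.

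Therefore $V(x,t)$ contributes nothing to the coefficient of $t^0$, which can be read off $U(x,t)$ by setting $t=0$. A direct expansion
\[
A(t)B(x/t) = \sum_{k,l\ge 0} a_k b_l\, t^{k-l} x^l
\]
shows that the $t^0$-coefficient is $\sum_n a_n b_n x^n = A(x)*B(x)$, completing the identification. The main obstacle I anticipate is the careful justification of coprimality of $p(t)$ and $\tilde r(t,x)$ in $F(x)[t]$ together with the controlled $t$-degree of the partial-fraction numerators; once those structural facts are secured, the rest is a routine reading of powers of $t$ in the two geometric expansions.
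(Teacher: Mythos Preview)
Your proof is correct and follows essentially the same route as the paper's: write $A(t)B(x/t)$ as a proper rational function of $t$ with denominator $p(t)\cdot t^n r(x/t)$, show the two factors are coprime, take the two-term partial fraction decomposition, and check that one piece has only nonnegative powers of $t$ while the other has only negative powers. The only stylistic difference is in the coprimality step: the paper factors both denominators into linear factors $t-\alpha$ and $t-\beta x$ over an extension field and observes these cannot coincide, whereas you argue via Gauss's lemma that irreducible factors of $p(t)$ in $F(x)[t]$ stay in $F[t]$ while those of $\tilde r(t,x)$ genuinely involve $x$; both arguments are valid and equally short.
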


\begin{proof}
Let $A(x)= N_A(x)/D_A(x)$ and $B(x) = N_B(x)/D_B(x)$, where $N_A(x)$, $D_A(x)$, $N_B(x)$, and $D_B(x)$ are polynomials and $D_B(x)$ has degree $n$. Then 
\begin{equation}
\label{e-NB}
A(t)B(x/t) = \frac{N_A(t)\cdot t^n N_B(x/t)}{D_A(t)\cdot t^n D_B(x/t)}
\end{equation}
We first note that $D_A(t)$ and $t^nD_B(x/t)$ are relatively prime as polynomials in $t$. One way to see this is to factor $D_A(t)$ and $t^n D_B(x/t)$ into linear factors over some extension field $G$ of $F$. Then the linear factors of $D_A(t)$ are all of the form $t-\alpha$ for $\alpha\in G$ while the linear factors of $t^n D_B(x/t)$ are all of the form $t-\beta x$ for $\beta\in G$. 

Since $A(x)$ and $B(x)$ are proper, it follows easily from \eqref{e-NB} that $A(t)B(x/t)$, as a rational function of $t$, is proper.
Thus $A(t)B(x/t)$ has a partial fraction expansion of the form 
\begin{equation*}
\frac{R_A(t,x)}{D_A(t)} + \frac{R_B(t,x)}{t^n D_B(x/t)},
\end{equation*}
where $R_A(t,x)$ and $R_B(t,x)$ are polynomials in $t$ with coefficients that are rational functions of $x$, and the degree of $R_B(t,x)$, as a polynomial in $t$, is less than $n$. Then $t^{-n}R_B(t,x)$ is a polynomial in $t^{-1}$ with no constant term, with coefficients that are rational functions of $x$, and $1/D_B(x/t)$, as an element of the Laurent series ring $F((t))((x))$,  has no positive powers of~$t$. Thus  $t^{-n}R_B(t,x)/D_B(x/t)$ has only negative powers of $t$. 

We know that $R_A(t,x)$ is a polynomial in $t$, so $R_A(t,x)/D_A(t)$ is a power series in $t$.  Thus $R_A(t,x)/D_A(t)$ is the sum of all terms in $A(t)B(x/t)$ with nonnegative powers of $t$, and so the constant term in $t$ in $A(t)/B(x/t)$ may be obtained by setting $t=0$ in $R_A(t,x)/D_A(t)$.
\end{proof}

While in principle the partial fraction method may be no more efficient than other methods, it may be easier in practice when using a computer algebra system  in which partial fraction expansion is already built in. If the denominator polynomials $D_A(x)$ and $D_B(x)$ are not irreducible, the built-in partial fraction function may give us a more refined decomposition than we need, but we can still apply the same method to this decomposition, or alternatively, we may use the extended Euclidean algorithm, or solve a system of equations by whatever method is convenient, to find the two-term partial fraction expansion.

We can compute binomial products in a similar way, using Prodinger's observation, discussed in Section \ref{s-cbc}, that the binomial product of $A(x)$ and $B(x)$ is the diagonal of \eqref{e-prod1}. It follows that $A(x)\odot B(x)$ is the constant term in $t$ in 
\begin{equation}
\label{e-ProdAB}
\frac{1}{1-t}A\left(\frac{x}{1-t}\right) B\left(\frac{x}{t}\right)
\end{equation}
which can also be computed by partial fraction expansion. (There is an analogue of Theorem \ref{t-pf} for binomial products.) 

For example, if we take $A(x)$ to be the Fibonacci generating function $x/(1-x-x^2)$ and $B(x)$ to be the Pell generating function $x/(1-x-2x^2)$ then expanding \eqref{e-ProdAB} by partial fractions in $t$ gives 
\begin{equation*}
\frac{1}{1-t}A\left(\frac{x}{1-t}\right) B\left(\frac{x}{t}\right)
   =\frac{x^2}{1-6x+7x^2+6x^3-9x^4}\bigl(R(x,t) + S(x,t)\bigr), 
\end{equation*}
where 
\begin{equation*}
R(x,t) = \frac{2-5x+x^2+3x^3 +(x-1)t}{1-x-x^2 + (x-2)t+t^2}
\end{equation*}
and
\begin{equation*}
S(x,t) = \frac{(1-x)t +2x^2 -3x^3}{t^2-2xt -x^2}= \frac{(1-x)t^{-1} +(2x^2 -3x^3)t^{-2}}{1-2xt^{-1} -x^2t^{-2}}.
\end{equation*}
Thus
\begin{equation*}
A(x) \odot B(x) = \frac{x^2}{1-6x+7x^2+6x^3-9x^4}R(x,0) = \frac{2x^2 -3x^3}{1-6x+7x^2+6x^3-9x^4},
\end{equation*}
as we found by a different method in Section \ref{s-comput}.

\section{Acknowledgment}
We would like to thank Alin Bostan for helpful suggestions and for bring references \cite{abm}, \cite{bfss}, and \cite{cmp} to our attention.

\bigskip
\hrule
\bigskip

\noindent 2020 {\it Mathematics Subject Classification}:
Primary 05A19, Secondary 05A15.

\noindent \emph{Keywords: }  binomial convolution, Hurwitz product, Hadamard product, resultant, rational power series.

\bigskip
\hrule
\bigskip

\noindent 
(Concerned with sequences
\seqnum{A000045},
\seqnum{A000073},
\seqnum{A000129},
\seqnum{A001045},
\seqnum{A001582},
\seqnum{A001608},
\seqnum{A002203},
\seqnum{A002203}, and
\seqnum{A014551}.)

\bigskip
\hrule
\bigskip

\end{document}